\newtheorem{theorem}{Theorem}[section]
\newtheorem{corollary}{Corollary}
\newtheorem{lemma}[theorem]{Lemma}
\theoremstyle{definition}
\newtheorem{definition}[theorem]{Definition}
\newtheorem{remark}{Remark}
\subjclass{Primary: 35L65.}
 \keywords{scalar conservation law; discontinuous
flux; existence and uniqueness; entropy conditions.}
 \email{matematika@t-com.me}
\DeclareMathOperator{\esslim}{esslim}
\DeclareMathOperator{\ccup}{\cup} \DeclareMathOperator{\Div }{div}
\def\R{I\!\!R}
\def\N{I\!\!N}
\def\cal{\mathcal}
\def\pa{\partial}
\def\la{\lambda}
\begin{document}

\title[$(\alpha,\beta)$-entropy conditions]
      {New entropy conditions for scalar conservation laws with discontinuous flux}

\author{ D.~ Mitrovi\'c}
\address{ Darko Mitrovic}, \address{University of Montenegro, Faculty of Mathematics, Cetinjski put bb, 81000 Podgorica, Montenegro and}
\address{University of Bergen, Faculty of Mathematics, Johannes Bruns gate 12, 5007 Bergen, Norway}
 \email{  matematika@t-com.me}

\begin{abstract}
We propose new Kruzhkov type entropy conditions for one dimensional
scalar conservation law with a discontinuous flux. We prove
existence and uniqueness of the entropy admissible weak solution to
the corresponding Cauchy problem merely under assumptions on the
flux which provide the maximum principle. In particular, we allow
multiple flux crossings and we do not need any kind of genuine
nonlinearity conditions.
\end{abstract}

\subjclass{35L65, 65M25}

\keywords{scalar conservation law; discontinuous flux; existence and
uniqueness}

\maketitle

In the current contribution, we consider the following problem
\begin{equation}
\label{oslo3}
\begin{cases}
\pa_t u+\pa_x\left(H(x)f(u)+H(-x)g(u) \right)=0, & (t,x)\in  \R^+\times \R\\
u|_{t=0}=u_0(x)\in L^\infty(\R), & x\in \R
\end{cases}
\end{equation}where $u$ is the scalar unknown function;
$u_0$ is a function such that $a\leq u_0 \leq b$, $a,b\in \R$; $H$
is the Heaviside function; and $f,g\in C^1({\bf R})$ are such that
$f(a)=f(b)=g(a)=g(b)=0$.

Problems such as \eqref{oslo3} are non-trivial generalization of
scalar conservation law with smooth flux, and they describe
different physical phenomena (flow in porous media, sedimentation
processes, traffic flow, radar shape-from-shading problems, blood
flow, gas flow in a variable duct...). Therefore, beginning with
eighties (probably from \cite{temple}), problems of type
\eqref{oslo3} are under intensive investigations.

As usual in conservation laws, the Cauchy problem under
consideration in general does not possess classical solution, and it
can have several weak solutions. Since it is not possible to
directly generalize standard theory of entropy admissible solutions
\cite{Kru}, in order to choose a proper weak solution to
\eqref{oslo3} many admissibility conditions were proposed. We
mention minimal jump condition \cite{GR}, minimal variation
condition and $\Gamma$ condition \cite{Die1, Die2}, entropy
conditions \cite{kar3, AG}, vanishing capillary pressure limit
\cite{Kaa}, admissibility conditions via adapted entropies \cite{AP,
kar4} or via conditions at the interface \cite{sid_2, sid, Die3}.

But, in every of the mentioned approaches, in order to prove
existence or uniqueness of a weak solution to the considered
problem, some structural hypothesis on the flux (such as convexity
or genuine nonlinearity) or on the form of the solution (see
\cite{sid_2, sid}) were assumed.

Recently, in \cite{NHM_mit}, we have proved existence and uniqueness
in the multidimensional situation. Still, due to certain technical
obstacles, admissible solutions selected in that paper are rather
special.

Here, we propose admissibility conditions which involve much less
restrictions than in previous works on the subject (excluding
\cite{NHM_mit} where there are no restrictions), and we still can
make many different stable semigroups depending on the physical
situation under considerations.

Since one can find excellent overviews on the subject in many papers
\cite{AKR, sid, Vov, kar4, Die3, Pan_08} which are easily available
via internet (e.g. www.math.ntnu.no/conservation), in this
introduction, we shall restrict our attention on papers \cite{kar3},
\cite{kar1}, and \cite{Pan_08} which are in the closest connection
to our contribution. Later, in Section 2, we shall comment how our
admissibility conditions can be considered as a generalization of
the entropy solution of type $(A,B)$ given in \cite{kar4} (see
Definition \ref{kenneth_2} in the current paper).

In \cite{kar3}, degenerate parabolic equation with discontinuous
flux is considered:
\begin{equation*}
\begin{cases}
\pa_t u+\pa_x\left(H(x)f(u)+H(-x)g(u) \right)=\pa_{xx}A(u), & (t,x)\in (0,T)\times \R\\
u|_{t=0}=u_0(x)\in BV(\R)\cap L^1(\R), & x\in \R,
\end{cases}
\end{equation*}where $A$ is non-decreasing with $A(0)=0$. Assuming that $A\equiv 0$ we obtain the problem of type \eqref{oslo3}.
In order to obtain uniqueness of a weak solution to the problem, the
Kruzhkov type entropy admissibility condition \cite{Kru} is used:

\begin{definition}\label{kar1} \cite{kar3}
Let $u$ be a weak solution to problem (\ref{oslo3}).

We say that $u$ is an entropy admissible weak solution to
(\ref{oslo3}) if the following entropy condition is satisfied for
every fixed $\xi\in {\bf R}$:
\begin{align*}
\nonumber
\pa_t {|u-\xi|}&+\pa_x \Big\{{\rm sgn}(u-\xi)\Big[H(x)({f}(u)-{f}(\xi))+H(-x)({g}(u)-{g}(\xi))\Big]\Big\}\nonumber\\
&\qquad\qquad\qquad\qquad-|{f}({\xi})-{g}({\xi})|\delta(x) \leq 0 \
\ {\rm in} \ \ {\cal D}'(\R^+\times \R).
\end{align*}
\end{definition} Still,
merely such entropy condition was insufficient to prove stability of
the admissible weak solution to the considered problem. Two more
things were necessary.

First, one needs the following technical assumption:

{\bf Crossing condition:} For any states $u, v$ the following
crossing condition must hold:
$$
f(u)-g(u)<0<f(v)-g(v) \Rightarrow u<v.
$$ Geometrically,
the crossing condition requires that either the graph of $f$ and $g$
do not cross, or the graph $g$ lies above the graph of $f$ to the
left of the crossing point (see Figure \ref{1}). The functions $f$
and $g$ appearing in \eqref{oslo3} do not necessarily satisfy the
crossing conditions, but it is possible to transform them so that
the crossing condition is satisfied (see Figure \ref{2} and Figure
\ref{3}).

Next, in \cite{kar1} existence of strong traces at the interface
$x=0$ was necessary. We provide appropriate definition.

\begin{definition}
Let $W : \R\times \R^+\to \R$ be a function that belongs to
$L^\infty(\R\times \R^+)$. By the right and left traces of $W(\cdot,
t)$ at the point $x = 0$ we understand functions $t\mapsto W(0\pm,
t)\in L^\infty_{loc}(\R^+)$ that satisfy for a.e. $t\in \R^+$:
\begin{equation*}
 \esslim\limits_{x\uparrow 0} |W(t,x)-W(t,0+)| = 0, \ \ \esslim\limits_{x\downarrow 0} |W(t,x)-W(t,0-)| = 0
\end{equation*}
\end{definition}

Assuming the crossing condition and the existence of traces, we have
the following theorem:

\begin{theorem} \cite{kar3}
\label{kenn} Assume that weak solutions $u$ and $v$ to \eqref{oslo3}
with the initial conditions $u_0$ and $v_0$, respectively, satisfy
entropy admissibility conditions from Definition \ref{kar1} and
admit left and right strong traces at the interface $x=0$.

Then for any $T, R>0$ there exist constants $C,\bar{R}>0$ such that:
\begin{equation}
\label{new_version} \int_0^T\int_{-R}^R|v(t,x)-u(t,x)| dx dt \leq C
T \int_{-\bar{R}}^{\bar{R}}|v_0(x)-u_0(x)| dx.
\end{equation}
\end{theorem}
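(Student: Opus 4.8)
The natural strategy is Kruzhkov's doubling of variables. The plan is to take the two entropy admissible solutions $u=u(t,x)$ and $v=v(s,y)$ and to invoke the entropy inequality of Definition \ref{kar1} twice: once for $u$ with the constant $\xi$ replaced by the value $v(s,y)$, and once for $v$ with $\xi$ replaced by $u(t,x)$. First I would test each inequality against a nonnegative function of the product form
\[
\Phi(t,x,s,y)=\phi\!\left(\tfrac{t+s}{2},\tfrac{x+y}{2}\right)\,\omega_\ep\!\left(\tfrac{t-s}{2}\right)\,\omega_\ep\!\left(\tfrac{x-y}{2}\right),
\]
where $\phi\ge 0$ is a smooth test function in the averaged variables and $\omega_\ep$ is a standard mollifier concentrating on the diagonal. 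Adding the two resulting inequalities and letting $\ep\to 0$, the mollifiers force $s\to t$ and $y\to x$, and the usual Kruzhkov computation collapses the entropy and entropy-flux terms onto the diagonal. Since by the maximum principle both solutions take values in the compact interval $[a,b]$ and $f,g\in C^1(\R)$ are Lipschitz there, all these manipulations are justified.

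Away from the interface, that is on $\{x>0\}$ and on $\{x<0\}$ separately, this produces exactly the classical Kato inequality with flux $f$ on the right half-line and $g$ on the left:
\[
\pa_t|u-v|+\pa_x\Big\{{\rm sgn}(u-v)\big[H(x)(f(u)-f(v))+H(-x)(g(u)-g(v))\big]\Big\}\le 0
\]
in the distributional sense on each half-space. The nonstandard contributions are concentrated on the line $x=0$: differentiating the Heaviside factors produces jump terms built from the one-sided traces $u(t,0\pm)$ and $v(t,0\pm)$, while the singular source $-|f(\xi)-g(\xi)|\delta(x)$, evaluated at $\xi=v$ and at $\xi=u$, contributes the two boundary densities $|f(v(t,0\pm))-g(v(t,0\pm))|$ and $|f(u(t,0\pm))-g(u(t,0\pm))|$ along $x=0$. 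The hypothesis that $u$ and $v$ admit strong left and right traces is precisely what makes these interface quantities well defined for a.e. $t$.

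The crux of the argument, and the step I expect to be hardest, is to show that the total interface density collected on $\{x=0\}$ is nonnegative, so that it may be discarded. After passing to traces this reduces to a pointwise inequality relating $u_\pm:=u(t,0\pm)$ and $v_\pm:=v(t,0\pm)$, comparing the jump of the Kruzhkov flux
\[
{\rm sgn}(u_+-v_+)(f(u_+)-f(v_+))-{\rm sgn}(u_--v_-)(g(u_-)-g(v_-))
\]
against the singular contributions. Here the crossing condition enters decisively: it rules out the configurations in which the graphs of $f$ and $g$ cross with $g$ lying below $f$ to the left, and thereby forces the signs of $f(\cdot)-g(\cdot)$ at the relevant trace values to align so that the interface remainder is $\ge 0$. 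This is a finite case analysis over the orderings of $u_\pm,v_\pm$ and the signs of $f-g$ at these states.

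Once the interface term has been shown to be nonnegative and dropped, I am left with the plain Kato inequality on all of $\R^+\times\R$. The conclusion \eqref{new_version} then follows by the standard localization: choose $\phi$ to approximate the indicator of a truncated cone of dependence over $[0,T]\times[-R,R]$ with slope equal to the common Lipschitz constant $L$ of $f$ and $g$ on $[a,b]$, which encodes finite speed of propagation and fixes $\bar R=R+LT$. Integrating the Kato inequality against this $\phi$ and sharpening the approximation gives
\[
\int_0^T\!\!\int_{-R}^R|v-u|\,dx\,dt\le C\,T\int_{-\bar R}^{\bar R}|v_0-u_0|\,dx,
\]
with $C$ depending only on $L$, which is the asserted estimate.
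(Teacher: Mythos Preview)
The paper does not supply its own proof of Theorem~\ref{kenn}: the result is quoted from \cite{kar3} and stated without argument, with the crossing condition imposed in the surrounding text. Your sketch correctly reproduces the strategy of \cite{kar3}---Kruzhkov doubling of variables, passage to the diagonal, isolation of the interface contribution at $x=0$ via the strong traces, and a case analysis (driven by the crossing condition) showing that the interface remainder has the right sign---so there is nothing to contrast.

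One clarification worth making explicit in your write-up: the crossing condition is not part of the displayed hypotheses of Theorem~\ref{kenn} but is assumed in the paragraph preceding it, and your interface sign analysis genuinely requires it (without it the case analysis fails). You should also be precise that the Rankine--Hugoniot relation $f(u_+)=g(u_-)$ and $f(v_+)=g(v_-)$, which follows from $u,v$ being weak solutions with traces, is used alongside the crossing condition in that case analysis; this is the step that \cite{kar3} carries out in detail.
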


\begin{remark}
It is important to notice that Theorem \ref{kenn} remains to hold if
in \eqref{oslo3}, instead of $\pa_t u$, we put $\pa_t
(\alpha(u)H(x)+\beta(u)H(-x))$, for some strictly increasing
bijections $\alpha: [a,b]\to [a',b']$ and $\beta:[a,b]\to [a'',b'']
$, $a',a'',b',b''\in R$. Indeed, since we did not put a function
depending on $t\in \R^+$ under the derivative $\pa_t$, and since
$\alpha$ and $\beta$ are increasing bijections (we can extract all
the information on $u$ knowing only $\beta(u)$ or $\alpha(u)$), we
can safely use results from \cite{kar1} on the equation
$\pa_t(\alpha(u)H(x)+\beta(u)H(-x))+\pa_x(f(u)H(x)+g(u)H(-x))=0$.
\end{remark}

%\placedrawing{crossing.lp}{Functions $f$ (normal line) and $g$
%(dashed line) satisfying the crossing condition.}{1}

\begin{figure}[htp]
\begin{center}
  \includegraphics[width=4in]{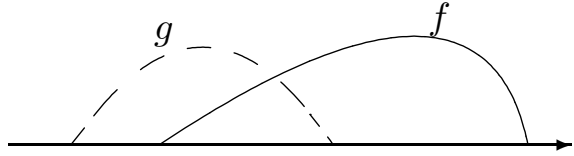}\\
  \caption{Functions $f$ (normal line) and $g$
(dashed line) satisfying the crossing condition.}
  \label{1}
  \end{center}
\end{figure}

First, we shall explain how to force the crossing condition and
existence of traces. We shall use the idea from \cite{Pan_08}. In
\cite{Pan_08}, the following problem was considered
\begin{equation}
\label{help1}
\begin{split}
\pa_t u + \pa_x f(\alpha(x,u))=&0,\\
u|_{t=0}=u_0(x),
\end{split}
\end{equation}where $\alpha$ is a function discontinuous in $x\in \R$ and strictly increasing with respect to $u$. Then, we can write:
$$
v=\alpha(x,u)  \Rightarrow u=\beta(x,v).
$$Problem \eqref{help1} becomes
\begin{equation}
\label{help2}
\begin{split}
\pa_t \beta(x,v) + \pa_x f(v)=&0,\\
v|_{t=0}=&\alpha(x,u_0).
\end{split}
\end{equation}Thus, the discontinuity in $x$ is removed out of the derivative in $x$, and we can apply standard vanishing viscosity approach:
\begin{equation}
\label{help3}
\begin{split}
\pa_t \beta(x,v_{\varepsilon}) + \pa_x f(v_{\varepsilon})=&\varepsilon \pa_{xx} v_{\varepsilon},\\
v|_{t=0}=& \alpha(x,u_0),
\end{split}
\end{equation} to obtain the sequence $(v_\varepsilon)$ strongly converging in $L^1_{loc}(\R\times \R^+)$ to a unique Kruzhkov admissible
weak solution $v$ of \eqref{help2} which immediately gives
uniqueness of appropriate weak solution to \eqref{help1}.

It is important to notice that the existence and uniqueness are
actually obtained thanks to the appropriate choice of the viscosity
term. Such choice enables the author to control the flux
corresponding to \eqref{help1}.

Using this observation, we shall propose new admissibility
conditions which will enable us to control the flux corresponding to
\eqref{oslo3} in an extent which will provide uniqueness in a rather
general situation. Informally speaking, we shall consider the
following vanishing viscosity regularization to \eqref{oslo3}:
\begin{equation}
\label{oslo3vv}
\begin{cases}
\pa_t u+\pa_x\left(H(x)f(u)+H(-x)g(u) \right)=\varepsilon \pa_{xx}(\tilde{\alpha}(u)H(x)+\tilde{\beta}(u)H(-x)),  \\
u|_{t=0}=u_0(x),
\end{cases}
\end{equation}where $\tilde{\alpha}: [a,b]\to [a',b']$ and
$\tilde{\beta}:[a,b]\to [a'',b'']$ are smooth strictly increasing
bijections.

Denote by $\alpha$ and $\beta$ the inverse functions of the
functions $\tilde{\alpha}$ and $\tilde{\beta}$, respectively.
Introducing the change of the unknown function:
$$
v=\tilde{\alpha}(u)H(x)+\tilde{\beta}(u)H(-x) \Rightarrow
u=\alpha(v)H(x)+\beta(v)H(-x),
$$and denoting $f_\alpha=f\circ \alpha$ and $g_\beta=g\circ\beta$, we have from \eqref{oslo3vv}:
\begin{equation}
\label{oslo3vv1}
\begin{cases}
\pa_t (\alpha(v)H(x)+\beta(v)H(-x))+\pa_x\left(H(x)f_\alpha(v)+H(-x)g_\beta(v) \right)=\varepsilon \pa_{xx} v, \\
v|_{t=0}=\tilde{\alpha}(u_0)H(x)+\tilde{\beta}(u_0)H(-x).
\end{cases}
\end{equation} So, instead of dealing with the flux $H(x)f(u)+H(-x)g(u)$, we deal with the new flux
$H(x)f_\alpha(v)+H(-x)g_\beta(v)$. As we shall see later, by
choosing appropriate functions $\alpha$ and $\beta$ we can always
make the new flux to satisfy "the crossing condition" at least in
the range of the solution (see Figure \ref{2} and Figure \ref{3} as
important special cases). Now, we can introduce the definition of
admissibility that we shall use.

\begin{definition}
\label{def-adm} Let $u$ be a weak solution to problem (\ref{oslo3}).
Let $\tilde{\alpha}:[a,b]\to [a', b']$ and $\tilde{\beta}:[a,b]\to
[a'', b'']$ be smooth strictly increasing bijections. Denote by
${\alpha}$ and ${\beta}$ the inverse functions to $\tilde{\alpha}$
and $\tilde{\beta}$, respectively.

We say that $u$ is an $(\alpha,\beta)$-entropy admissible solution
to \eqref{oslo3} if

(D.1.) $u\in L^\infty(\R^+\times \R)$ and $u(t,x)\in [a,b]$ for
almost every $(t,x)\in \R^+\times \R$;

(D.2) the function $v=\tilde{\alpha}(u)H(x)+\tilde{\beta}(u)H(-x)$
satisfies the following entropy condition for every fixed $\xi\in
{\bf R}$:
\begin{align}
 \label{sep827}
& \pa_t \Big\{{\rm sgn}(v-\xi)\Big[H(x)({\alpha}(v)-{\alpha}(\xi))
+H(-x)({\beta}(v)-{\beta}(\xi))\Big]\Big\}\\&+\pa_x
\Big\{{\rm sgn}(v-\xi)\Big[H(x)({f}_\alpha(v)-{f}_\alpha(\xi))+H(-x)({g}_\beta(v)-{g}_\beta(\xi))\Big]\Big\}\nonumber\\
&\qquad\qquad\qquad\qquad-|{f}_\alpha({\xi})-{g}_\beta({\xi})|\delta(x)
\leq 0,\nonumber
\end{align} where, as before, $f_\alpha=f\circ \alpha$ and
$g_\beta=g\circ \beta$.
\end{definition}

\begin{figure}[htp]
\begin{center}
  \includegraphics[width=5in]{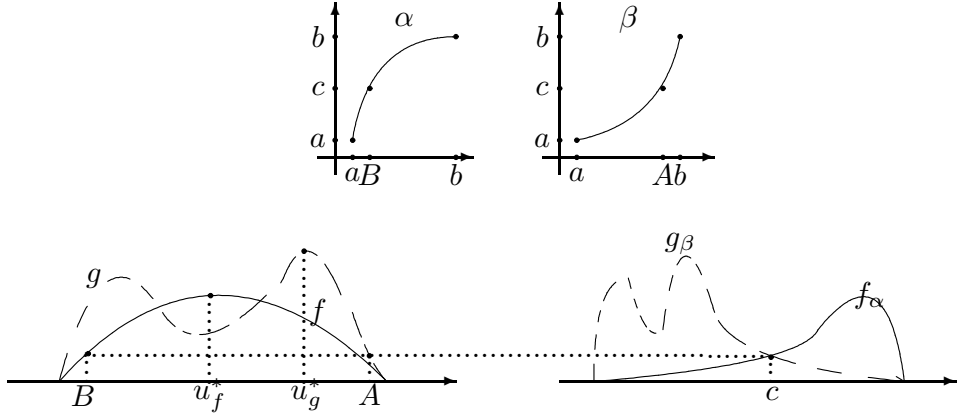}\\
  \caption{Functions $f$ (normal line) and $g$
(dashed line) on the left plot do not satisfy the crossing
condition. On the other hand, for appropriate (highly concave)
$\alpha$ and (highly convex) $\beta$, the functions $f_\alpha=f\circ
\alpha$ and $g_\beta=g\circ \beta$ on the right plot satisfy the
crossing conditions.}
  \label{2}
  \end{center}
\end{figure}

%\placedrawing{cross_1.lp}{Functions $f$ (normal line) and $g$
%(dashed line) on the left plot do not satisfy the crossing
%condition. On the other hand, for appropriate (highly concave)
%$\alpha$ and (highly convex) $\beta$, the functions $f_\alpha=f\circ
%\alpha$ and $g_\beta=g\circ \beta$ on the right plot satisfy the
%crossing conditions.}{2}

From the previous analysis, appealing on \cite{kar3}, we conclude
that we need only existence of traces to obtain the uniqueness. The
question of existence of traces is rather serious in itself
\cite{KV, Pan2, Vas}, but it was shown in \cite{Pan2} that they
exist practically in all relevant situations . In order to formulate
a necessary theorem, we need the notion of the quasi-solution.

\begin{definition}
We say that the function $u\in L^\infty(\R^d)$ is a quasi-solution
to the scalar conservation law
$$
{\rm div}_xF(u)=0, \ \ x\in \R^d,
$$ where $F=(F_1,\dots, F_d)\in C(\R^d;\R)$ if it satisfies for
every $\xi\in \R$:
\begin{align*}
{\rm div}_x {\rm sgn}(u-\xi)(F(u)-F(\xi))= \gamma_k \ \ {\rm in} \ \
{\cal D}'(\R^d),
\end{align*} where $\gamma_k$ is a locally bounded Borel measure.
\end{definition}

Next theorem can be found in \cite{Pan2}. We adapt it to our
situation.

\begin{theorem} \cite{Pan2}
\label{existenceoftraces} Let $h,f\in C(\R)$.

Suppose that the function $u$ is a quasi-solution to
\begin{equation*}
\pa_t h(u)+\pa_x f(u)=0,
\end{equation*} where the vector $(h,f)$ is such that the mappings $\lambda\mapsto h(\lambda)$ and $\lambda\mapsto f(\lambda)$ are
not constant on any non-degenerate interval.

Then, the function $u$ admits right and left strong traces at $x=0$.
\end{theorem}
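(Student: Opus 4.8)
The plan is to prove existence of the left trace $u(t,0-)$; the right trace follows by the symmetric argument on $\{x>0\}$. First I would recast strong traces in a blow-up form: $u$ possesses a strong left trace at $x=0$ precisely when, for almost every anchor time $t_0$, the isotropically rescaled functions $u^\nu(t,x)\od u(t_0+\nu t,\nu x)$, defined on the half-plane $\{x<0\}$, converge as $\nu\downarrow0$ strongly in $L^1_{loc}$ to a limit that is constant in the normal variable $x$. Thus the whole statement reduces to two points: (i) strong $L^1_{loc}$ compactness of the blow-up family $(u^\nu)$, and (ii) $x$-independence of every accumulation point. Point (ii) will be cheap; the entire difficulty sits in (i), i.e.\ in excluding persistent oscillations of $u$ as $x\uparrow0$.

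Next I would pass to the kinetic formulation carried by the quasi-solution hypothesis. Writing $\rho(t,x,\lambda)\od\mathrm{sgn}(u(t,x)-\lambda)$, the defining identity from the quasi-solution property supplies, for each fixed $\xi$, a locally finite Borel measure equal to $\pa_t[\mathrm{sgn}(u-\xi)(h(u)-h(\xi))]+\pa_x[\mathrm{sgn}(u-\xi)(f(u)-f(\xi))]$. Differentiating this in $\xi$, and using $\pa_\xi\mathrm{sgn}(u-\xi)=-2\delta(u-\xi)$ together with the vanishing of $\delta(u-\xi)(h(u)-h(\xi))$, the family of balances collapses to the single kinetic transport relation
\begin{equation*}
h'(\lambda)\,\pa_t\rho+f'(\lambda)\,\pa_x\rho=\pa_\lambda\mu \quad\text{in}\ {\cal D}'(\R^+\times\R\times\R),
\end{equation*}
understood weakly so as to make sense for merely continuous $h,f$, where $\mu$ is a locally finite measure in $(t,x,\lambda)$ assembled from the measures $\gamma_\xi$. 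This relation is the microlocal object encoding all entropy information and is the natural carrier for the compactness analysis.

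I would then blow up this relation. Under the isotropic dilation the homogeneous part is scale invariant while the dissipation $\pa_\lambda\mu$ picks up a vanishing factor, since a locally finite measure cannot concentrate on the scale of the dilation; hence every weak-$\ast$ limit $\rho^\infty$ of $\mathrm{sgn}(u^\nu-\lambda)$ solves the homogeneous kinetic equation. The hard part — and the only genuinely delicate step — is to upgrade this weak convergence to \emph{strong} convergence. I would do this with the H-measure (microlocal defect measure) $\sigma$ associated to the oscillation $\mathrm{sgn}(u^\nu-\lambda)-\rho^\infty$. The localization principle applied to the homogeneous kinetic transport equation forces $\sigma$ to be carried by the set of $(\lambda;\tau,\zeta)$ for which $\lambda\mapsto\tau h(\lambda)+\zeta f(\lambda)$ is locally constant near $\lambda$. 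Here the non-degeneracy hypothesis is decisive: since neither $h$ nor $f$ is constant on any non-degenerate interval, for a.e.\ frequency direction $(\tau,\zeta)\neq0$ this set is Lebesgue-negligible in $\lambda$, so $\sigma$ must vanish. Vanishing of the H-measure is exactly strong $L^1_{loc}$ convergence of the velocity averages $\int\mathrm{sgn}(u^\nu-\lambda)\,\varphi(\lambda)\,d\lambda$, hence of $(u^\nu)$ itself.

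Finally, strong $L^1_{loc}$ convergence of the blow-ups is equivalent to existence of the essential limit defining $u(t,0-)$, which gives point (i); and the same localization, read now in the normal frequency $\zeta$, forces $\rho^\infty$ to be independent of $x$, giving point (ii). Combining the two, $u$ admits a strong left trace, and symmetrically a strong right trace, at $x=0$. The decisive obstacle throughout is the weak-to-strong upgrade of the blow-up family, for which the non-degeneracy of $(h,f)$ is used in an essential way through the localization of the H-measure; the remaining ingredients are the routine kinetic bookkeeping and the verification that the entropy-dissipation measure is subcritical under the blow-up.
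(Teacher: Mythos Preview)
The paper does not prove this theorem. Theorem~\ref{existenceoftraces} is quoted from \cite{Pan2} with the explicit attribution ``Next theorem can be found in \cite{Pan2}. We adapt it to our situation,'' and is used as a black box throughout; there is no proof or even a sketch in the paper to compare your proposal against.

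That said, your outline does track the architecture of Panov's argument in \cite{Pan2}: blow-up at the interface, kinetic reformulation of the quasi-solution property, and an H-measure localization using the non-degeneracy of $(h,f)$ to kill oscillations. A couple of points in your sketch are looser than the actual proof requires. First, the assertion that the entropy-dissipation measure ``picks up a vanishing factor'' under isotropic blow-up is not automatic: a locally finite measure rescaled this way need not go to zero, and Panov has to work to show the defect measure is subcritical (this is where the averaging/H-measure machinery is really doing work, not merely in the weak-to-strong upgrade). Second, the $x$-independence of the blow-up limit is not quite ``cheap''; it uses the same localization argument applied specifically in the normal direction, and in Panov's setting one also needs that the trace is attained in $L^1$ of time, which is an additional step. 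These are refinements rather than gaps, and your overall plan is the right one --- but for the purposes of this paper there is nothing to compare, since the result is simply imported.
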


Now, the situation with traces is clear and we need to cope with the
existence of a solution admissible in the sense of Definition
\ref{def-adm}.

In the case of a scalar conservation law with a smooth flux, the
proof of existence is based on the BV-estimates for a sequence of
solutions to the corresponding Cauchy problem regularized with the
vanishing viscosity. Such estimates are not available if the flux is
discontinuous. Therefore, we need to apply more subtle arguments
involving singular mapping \cite{temple}, local variation bounds
\cite{BGKT}, compensated compactness \cite{kar2, kar1, ken_chi,
Tar0}, difference schemes \cite{sid, kar3, ken_chi} or $H$-measures
\cite{HKM, Ger, Pan1, Tar}.

In general, using e.g. the compensated compactness, it is possible
to prove that the sequence $(u_\varepsilon)$ of solutions to
\eqref{oslo3vv} weakly converges to a weak solution $u$ of
\eqref{oslo3}. However, it is not possible to state that the weak
solution satisfies wanted admissibility conditions. In order to be
sure that $u$ is admissible, in principle, we need to prove that the
corresponding sequence $(u_\varepsilon)$ strongly converges strongly
in $L^1_{loc}(\R^+\times \R)$ to $u$ (still, not necessarily; see
\cite{Pan_15}) which, at least in the framework of the compensated
compactness (or the $H$-measures whose consequences we are going to
use), can be proved only by assuming the genuine nonlinearity
condition given by the following definition.

\begin{definition}
\label{non-deg}Let $h:\R^2\to \R$ and $f,g:\R\to \R$.

We say that the vector
$\left(h(x,\lambda),H(x)f(\lambda)+H(-x)g(\lambda)\right)$ is
genuinely nonlinear if for almost every $x\in \R$ and every
$(\xi_0,\xi_1)\in S^1$, $S^1\subset \R^2$ is two dimensional sphere,
the mapping
\begin{equation*}
(a,b)\ni \lambda \mapsto \xi_0
h(x,\lambda)+\xi_1\left(H(x)f(\lambda)+H(-x)g(\lambda)\right) ,
\end{equation*} is different from a
constant on any non-degenerate interval $(\alpha,\beta)\subset
(a,b)$.
\end{definition}

The latter condition provides the following theorem to hold.

\begin{theorem}
\cite{Pan1} \label{tcrucial} Assume that the vector
$(h(x,u),H(x)f(u)+H(-x)g(u))$,\\ $(x,u)\in \R\times \R$, is
genuinely nonlinear in the sense of Definition \ref{non-deg}.

Then, the following statement holds: \\
Each family $(v_{\varepsilon}(t,x))\in L^\infty(\R^+\times \R)$,
$a\leq v_\varepsilon \leq b$, $\varepsilon>0$, such that for every
$c\in \R$ the quantity
\begin{align}
\label{may257}
&\pa_t(H(v_{\varepsilon}-c)(h(x,v_{\varepsilon})-h(x,c)))\\&+\pa_x\left(
H(v_{\varepsilon}-c)(\left(H(x)(f(v_\varepsilon)-f(c))+H(-x)(g(v_\varepsilon)-g(c))\right))\right)
\nonumber
\end{align}
is precompact  in  $W_{\rm loc}^{-1,2}(\R^+\times \R)$, contains a
subsequence convergent in $L^1_{\rm loc}(\R^+\times \R)$.
\end{theorem}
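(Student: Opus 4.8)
The plan is to reduce the asserted strong precompactness to the statement that the Young measure generated by $(v_{\varepsilon})$ is almost everywhere a Dirac mass, and to obtain the latter through an $H$-measure localization argument fed by the hypothesis \eqref{may257}. First I would pass to a subsequence (not relabeled) along which the bounded family $(v_{\varepsilon})$, $a\le v_{\varepsilon}\le b$, generates a Young measure $\{\nu_{t,x}\}$ with $\operatorname{supp}\nu_{t,x}\subset[a,b]$. Since strong $L^1_{\rm loc}$ convergence of a further subsequence is equivalent to $\nu_{t,x}=\delta_{u(t,x)}$ for almost every $(t,x)$, it suffices to prove that $\nu_{t,x}$ is Dirac. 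Writing $\theta_c(t,x)=\nu_{t,x}((c,b])$, this amounts to showing $\theta_c(t,x)\in\{0,1\}$ for all $c$ in a countable dense set and almost every $(t,x)$; by monotonicity of $c\mapsto\theta_c$ this forces the distribution function of $\nu_{t,x}$ to be an indicator, i.e. $\nu_{t,x}=\delta_{u(t,x)}$.

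Next, for each $c$ I would set $U_{\varepsilon}^c=H(v_{\varepsilon}-c)-\theta_c$, a bounded family converging weakly-$*$ to zero, and let $\mu_c$ be the associated $H$-measure on $(\R^+\times\R)\times S^1$ in the dual variables $(\tau,\xi)$, in the sense of \cite{Tar, Pan1}. Writing the entropy--flux pair in \eqref{may257} as $\eta_c(x,\lambda)=H(\lambda-c)(h(x,\lambda)-h(x,c))$ and $q_c(x,\lambda)=H(\lambda-c)(H(x)(f(\lambda)-f(c))+H(-x)(g(\lambda)-g(c)))$, one checks that these are continuous in $\lambda$ (the $\delta$ produced by differentiating $H(\lambda-c)$ is annihilated by the vanishing factor at $\lambda=c$), with $\pa_\lambda\eta_c=H(\lambda-c)\,\pa_\lambda h$ and $\pa_\lambda q_c=H(\lambda-c)(H(x)f'+H(-x)g')$. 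The hypothesis that $\pa_t\eta_c+\pa_x q_c$ is precompact in $W^{-1,2}_{\rm loc}$ then enters through the localization principle, which carries $\mu_c$ on the set of $(t,x,\tau,\xi)$ at which
\begin{equation*}
\tau\,\pa_\lambda h(x,\lambda)+\xi\big(H(x)f'(\lambda)+H(-x)g'(\lambda)\big)=0
\end{equation*}
for $\lambda$ ranging over $\operatorname{supp}\nu_{t,x}$.

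The decisive step is to combine this with the genuine nonlinearity of Definition \ref{non-deg}. For almost every fixed $x$ (hence on each half-line $x>0$ and $x<0$ separately, where the coefficient is the constant-in-$x$ datum $f'$, resp. $g'$) the map $\lambda\mapsto\tau h(x,\lambda)+\xi(H(x)f(\lambda)+H(-x)g(\lambda))$ is nonconstant on every nondegenerate interval, so the characteristic set above is Lebesgue-null in $\lambda$ and cannot be charged by $\mu_c$ unless $\mu_c$ vanishes there. Thus $\mu_c$ vanishes over $\{x\ne0\}$, which by the basic property of $H$-measures yields $U_{\varepsilon}^c\to0$ strongly in $L^2_{\rm loc}$. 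I expect the \emph{main obstacle} to be precisely the interface $x=0$: although $\{x=0\}$ is Lebesgue-null in $(t,x)$, the $H$-measure lives on the cosphere bundle and may a priori concentrate there, and the symbol is discontinuous across it. Ruling this out requires treating the two half-planes as constant-coefficient problems and controlling any concentration of $\mu_c$ on $\{x=0\}$ via commutator/localization estimates adapted to discontinuous coefficients, as carried out in \cite{Pan1}.

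Finally, strong $L^2_{\rm loc}$ convergence $H(v_{\varepsilon}-c)\to\theta_c$, together with the identity $H(v_{\varepsilon}-c)^2=H(v_{\varepsilon}-c)$, gives $\theta_c=\theta_c^2$ almost everywhere, whence $\theta_c\in\{0,1\}$. By the reduction of the first paragraph the Young measure is then a Dirac mass almost everywhere, and the corresponding subsequence of $(v_{\varepsilon})$ converges in $L^1_{\rm loc}(\R^+\times\R)$, as claimed.
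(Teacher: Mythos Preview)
The paper does not prove Theorem \ref{tcrucial}; it is quoted verbatim from \cite{Pan1} and used as a black box (in Theorem \ref{19}, Lemma \ref{l20}, and the proof of Theorem \ref{th-main}). There is thus no proof in the paper to compare your attempt against.

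For what it is worth, your sketch follows the broad strategy of Panov's argument in \cite{Pan1}: reduce strong precompactness to the Young measure being Dirac, introduce $H$-measures indexed by the kinetic parameter $c$, invoke a localization principle together with genuine nonlinearity, and close with the idempotency $H(v_\varepsilon-c)^2=H(v_\varepsilon-c)$. One genuine gap in your outline is the step from precompactness of the \emph{nonlinear} quantities $\pa_t\eta_c(x,v_\varepsilon)+\pa_x q_c(x,v_\varepsilon)$ to a localization constraint on the $H$-measure $\mu_c$ of the \emph{linear} sequence $U_\varepsilon^c=H(v_\varepsilon-c)-\theta_c$: the classical localization principle applies to expressions of the form $\sum_j A_j(x)\pa_j U_\varepsilon$, and here $\eta_c(x,v_\varepsilon)$ and $q_c(x,v_\varepsilon)$ are nonlinear in $v_\varepsilon$, not linear in $U_\varepsilon^c$. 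Bridging this is precisely where Panov's $H$-measures with a continuous index (the parameter $c$ playing the role of a kinetic variable) enter, and that machinery is not something one can wave through. Your identification of the interface $x=0$ as the delicate point is accurate; \cite{Pan1} handles the discontinuous-in-$x$ symbol by an adapted construction rather than the smooth-coefficient Tartar--G\'erard theory.
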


So, our last obstacle is the genuine nonlinearity condition. In
order to overcome it we shall use an idea from \cite{kar1} which is
further developed in \cite{MA}. In \cite{kar1, MA}, existence of
solution to a Cauchy problem of type \eqref{oslo3} is proved.
Roughly speaking, the key point of the proof is based on a lemma
stating that if in \eqref{oslo3} we assume $u_0\in BV(\R)$, then,
for the sequence $(u_\varepsilon)$ of solutions to \eqref{oslo3vv},
it holds $\|\pa_t u_\varepsilon\|_{L^1(\R)}\leq const$ for every
fixed $t, \varepsilon \in \R^+$. This actually means that for any
function $h(x,\lambda)$, $x,\lambda\in \R$, which is Lipshitz
continuous in $\lambda$, it holds $\|\pa_t
h(x,u_\varepsilon)\|_{L^1(\R)}\leq const$ for every fixed $t,
\varepsilon \in \R^+$.

Next, it is not difficult to prove that it holds for the sequence
$(u_\varepsilon)$ of solutions to \eqref{oslo3vv}
\begin{equation*}
\pa_t(H(u_{\varepsilon}-c)(u_{\varepsilon}-c))+\pa_x\left(
H(u_{\varepsilon}-c)\left(H(x)(f(u_\varepsilon)-f(c))\!+\!H(-x)(g(u_\varepsilon)-g(c))\right)\right)
\end{equation*} is precompact  in $W_{\rm loc}^{-1,2}(\R^+\times \R^d)$. However, since $(|\pa_t u_\varepsilon|)$
is the sequence bounded in the space of Radon measures, we also
have:
\begin{equation*}
\begin{split}
&\pa_t(H(u_{\varepsilon}-c)\left(H(x)(h_R(u_\varepsilon)-h_R(c))+H(-x)(h_L(u_\varepsilon)-h_L(c))\right)\\&+\pa_x\left(
H(u_{\varepsilon}-c)\left(H(x)(f(u_\varepsilon)-f(c))+H(-x)(g(u_\varepsilon)-g(c))\right)\right)
\end{split}
\end{equation*} is precompact  in  $W_{\rm loc}^{-1,2}(\R^+\times \R^d)$ if $h_L,h_R\in {\rm Lip}(\R)$ (Lipschitz continuous functions).
Furthermore, if we choose $h_L$ and $h_R$ so that the vector
$(H(x)h_R(u)+H(-x)h_L(u), H(x)f(u)+H(-x)g(u))$ is genuinely
nonlinear, we can apply Theorem \ref{tcrucial} to conclude about
strong $L^1_{loc}$ precompactness of the family $(u_\varepsilon)$.
It is clear that a $L^1_{loc}$ limit along a subsequence of the
family $(u_\varepsilon)$ will represent wanted admissible weak
solution to \eqref{oslo3}. Furthermore, according to Theorem
\ref{existenceoftraces}, we infer about the existence of traces at
the interface $x=0$ for the previously constructed weak solution
which immediately gives uniqueness. Of course, it is not always
possible to choose $h_R$ and $h_L$ so that we have both, the genuine
nonlinearity and the crossing conditions fulfilled. Still, as we
shall see, using truncation functions $s_{l,k}(u)=\max\{l,\min\{k,u
\}\}$, $l<k$, $l,k\in \R$, (first used in \cite{Pan2} for this kind
of problems; see also \cite{HKM}), we are able to localize and thus
deal with the segments where the genuine nonlinearity is
unobtainable.

The paper is organized as follows.

In Section 1, we solve \eqref{oslo3} under additional assumptions on
the flux. We find the section important since it sheds (another)
light on paper \cite{kar4} where the crossing condition is bypassed
 by using so called adapted entropies (see \cite{AP}). We show
that admissibility conditions that we introduced in Definition
\ref{def-adm} can be considered as a generalization of the approach
from \cite{kar4}, which is actually an explanation how adapted
entropies enabled avoiding (or maybe better to say forced) the
crossing conditions.

In Section 2, by passing to the measure valued solution concept
\cite{Dpe}, we show existence and uniqueness in the general
situation.

\section{New entropy admissibility conditions}

The basic purpose of the section is to explain connection between
our $(\alpha,\beta)$-entropy solutions and the entropy solutions of
type $(A,B)$ used in \cite{kar4}. Furthermore, we find that this
section represents a good introduction into the general situation
considered in Section 3.

We shall consider here \eqref{oslo3} under the additional
assumptions that the mappings
\begin{equation}
\label{bergen_11} \lambda\mapsto f(\lambda), \ \ \lambda\mapsto
g(\lambda)
\end{equation} are nonconstant and strictly positive on any subinterval of the interval
$(a,b)$ (notice that this assumption is weaker than the appropriate
assumption \cite[(1.2)]{kar4} which demands a genuine nonlinearity
of $f$ and $g$).

To proceed, let us briefly recall the concept from \cite{kar4}.
First, we need the function $c^{AB}$ (see \cite[(11)]{kar4}):
$$
c^{AB}(x)=\begin{cases} A, & x\leq 0\\
B, & >0
\end{cases}.
$$
In \cite{kar4}, the function $c^{AB}$ is used to form the function
$u\mapsto |u- c^{AB}(x)|$ which is an example of what is in
\cite{AP} called an adapted entropy. Still, in \cite{AP}, the
existence of infinitely many adapted entropies was necessary to
prove uniqueness (see also \cite{Pan_08}) while in \cite{kar4} only
the entropy $u\mapsto |u- c^{AB}(x)|$ was sufficient (together with
the classical Kruzhkov entropies out of the interface). The function
$c^{AB}$ is called a connection if it represents a weak solution to
\eqref{oslo3}, i.e. if $f(B)=g(A)$ (see Remark \ref{refrem} for a
more precise explanation). We remark that the notion of the
connection originated from \cite{sid_2}. The following admissibility
conditions were used in \cite{kar4}:

\begin{definition}\cite[Definition
3.1.]{kar4} \label{kenneth_2}  (Entropy solution of type $(A,B)$).
A measurable function $u:\R^+\times \R\to \R$, representing a weak
solution to \eqref{oslo3} is an entropy solution of type $(A,B)$ if
it satisfies the following conditions:

(D.1) $u \in L^\infty(\R^+\times\R)$; $u(t,x) \in [a,b]$ for a.e.
$(t,x) \in \R^+\times \R$.

(D.2) For any test function $0\leq \varphi\in {\cal D}([0, T )\times
\R)$, $T>0$, which vanishes for $x\geq 0$, and any $\xi\in \R$, the
following holds:
\begin{equation*}
\int_0^T\int_{\R}\left( |u- \xi|\varphi_t + {\rm sgn}(u - \xi)( f(u)
-f(\xi))\varphi_x \right)dx dt +\! \int_{\R} |u_0 - \xi|
\varphi(0,x)dx\geq  0,
\end{equation*} and for any test function $0\leq \varphi\in {\cal D}([0, T )\times \R)$, $T>0$, which vanishes
for $x\leq 0$
\begin{equation*}
\int_0^T\int_{\R}\left( |u- \xi|\varphi_t + {\rm sgn}(u - \xi)( g(u)
-g(\xi))\varphi_x \right)dx dt + \int_{\R} |u_0 - \xi|
\varphi(0,x)dx\geq 0,
\end{equation*}

(D.3) The following Kruzhkov-type entropy inequality holds for any
test function $0 \leq \varphi \in {\cal D}([0, T )\times \R)$,
$T>0$,
\begin{equation*}
\begin{split}
 \int_0^T&\int_{\R}\Big( |u- c^{AB}(x)|\varphi_t \\
&+ \!{\rm sgn}(u\! -\! c^{AB}(x))(H(x)(f(u)\! -\!f(A))\!+\!
H(-x)(g(u)\! -\!g(B)))\varphi_x \Big)dx dt \\& + \int_{\R} |u_0 -
c^{AB}(x)| \varphi(0,x)dx\geq  0.
\end{split}
\end{equation*}
\end{definition}
In the next theorem, we state that the $(\alpha,\beta)$-entropy
admissible solution from Definition \ref{def-adm} is, under certain
conditions, at the same time an entropy solution of type $(A,B)$
from Definition \ref{kenneth_2}. In Remark \ref{refrem} after the
theorem, we shall explain why such conditions are always fulfilled
in the case of the flux given in \cite{kar4}.

\begin{theorem}
\label{equiv} Assume that the function $u$ is an
$(\alpha,\beta)$-entropy admissible solution to \eqref{oslo3} in the
sense of Definition \ref{def-adm} where ${\alpha}$ and ${\beta}$
satisfy:
\begin{itemize}

\item $\alpha,\beta:[a,b] \to [a,b]$;

\item there exists $c\in (a,b)$ such that ${\alpha}(c)=B$ and
${\beta}(c)=A$ where $f(B)=g(A)$;

%, and $\tilde{\alpha}$ and
%$\tilde{\beta}$ are the inverse functions to $\alpha$ and $\beta$

\item the functions $f\circ \alpha$ and $g\circ \beta$ satisfy the
crossing conditions.

\end{itemize} Then, the $(\alpha,\beta)$-entropy admissible solution
to \eqref{oslo3} is at the same time the entropy solution of type
$(A,B)$.
\end{theorem}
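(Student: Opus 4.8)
The plan is to verify the three defining conditions (D.1)--(D.3) of Definition \ref{kenneth_2} directly from Definition \ref{def-adm}; the whole point is to test the single distributional inequality \eqref{sep827} against suitably chosen (one--sided, or specially centred) test functions. Condition (D.1) is literally (D.1) of Definition \ref{def-adm}, so nothing is needed there. For (D.2) and (D.3) I would exploit two elementary facts: that $\alpha,\beta$ are increasing, so that ${\rm sgn}(v-\xi)={\rm sgn}(\alpha(v)-\alpha(\xi))$ on $\{x>0\}$ and $={\rm sgn}(\beta(v)-\beta(\xi))$ on $\{x<0\}$; and that on the two half--planes the substitutions $v=\tilde\alpha(u)$ and $v=\tilde\beta(u)$ give back $\alpha(v)=u$, $\beta(v)=u$, $f_\alpha(v)=f(u)$, $g_\beta(v)=g(u)$.

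For (D.2) I would test \eqref{sep827} against $0\le\varphi\in\mathcal{D}$ supported in the open right half--plane $\{x>0\}$. There $H(x)=1$, $H(-x)=0$, and the singular term $|f_\alpha(\xi)-g_\beta(\xi)|\delta(x)$ pairs to $0$ since $\varphi$ vanishes at $x=0$. Writing $\eta=\alpha(\xi)$ and using the sign identity above, the inequality collapses to $\pa_t|u-\eta|+\pa_x\big({\rm sgn}(u-\eta)(f(u)-f(\eta))\big)\le0$, i.e. the usual Kruzhkov inequality with flux $f$. As $\xi$ runs over the range of $\tilde\alpha$ the reference state $\eta=\alpha(\xi)$ runs over all of $[a,b]$, and for $\eta\notin[a,b]$ the inequality is trivial because $u\in[a,b]$; hence the right--hand inequality of (D.2) holds for every $\xi$. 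The identical computation on $\{x<0\}$ with $\eta=\beta(\xi)$ and flux $g$ gives the left--hand inequality. The initial--data terms $\int_\R|u_0-\xi|\varphi(0,x)\,dx$ are then recovered in the standard way: since $u$ is a weak solution of \eqref{oslo3} it attains $u_0$ in $L^1_{\rm loc}$ as $t\downarrow0$, and integrating the distributional inequality over $t\in(\tau,T)$ and letting $\tau\downarrow0$ produces exactly these boundary terms.

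For (D.3) the hypotheses on the connection enter. I would take the single value $\xi=c$ with $\alpha(c)=B$ and $\beta(c)=A$. On $\{x>0\}$ the entropy density becomes ${\rm sgn}(u-B)(u-B)=|u-B|$ and the flux ${\rm sgn}(u-B)(f(u)-f(B))$; on $\{x<0\}$ they become $|u-A|$ and ${\rm sgn}(u-A)(g(u)-g(A))$. Since $c^{AB}(x)=B$ for $x>0$ and $=A$ for $x<0$, the density is precisely $|u-c^{AB}(x)|$ and the flux is the associated adapted--entropy flux. The decisive point is the interface term: at $\xi=c$ it equals $-|f_\alpha(c)-g_\beta(c)|\,\delta(x)=-|f(B)-g(A)|\,\delta(x)=0$ by the connection identity $f(B)=g(A)$. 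Hence \eqref{sep827} with $\xi=c$ reads
\begin{equation*}
\pa_t|u-c^{AB}(x)|+\pa_x\Big({\rm sgn}(u-c^{AB}(x))\big(H(x)(f(u)-f(B))+H(-x)(g(u)-g(A))\big)\Big)\le0,
\end{equation*}
which, after inserting the initial datum exactly as above, is the Kruzhkov--type inequality for the adapted entropy $|u-c^{AB}(x)|$, i.e. condition (D.3).

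The main obstacle is the correct treatment of the interface. In (D.2) it is circumvented by using one--sided test functions, so the $\delta(x)$ term never contributes; in (D.3) it must be confronted head--on, and the construction works precisely because the choice $\xi=c$ together with $\alpha(c)=B$, $\beta(c)=A$ and $f(B)=g(A)$ annihilates the measure concentrated on $\{x=0\}$. This is where all three structural hypotheses are used: $\alpha,\beta:[a,b]\to[a,b]$ guarantees that the reference states produced in (D.2) exhaust $[a,b]$, the existence of $c$ realising the connection supplies the centre for (D.3), and the crossing condition for $f\circ\alpha$ and $g\circ\beta$ is what makes $(A,B)$ an admissible connection, so that the adapted--entropy inequality just derived is genuinely the defining inequality of an entropy solution of type $(A,B)$. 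The remaining work—justifying the sign identities and the passage $\tau\downarrow0$—is routine.
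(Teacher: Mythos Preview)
Your argument is correct and arrives at the same conclusion as the paper, but for condition (D.3) you take a more direct route than the paper does. The paper first observes that $c^{AB}$ is itself an $(\alpha,\beta)$-entropy admissible solution (since in the transformed variable it is the constant $c$), and then invokes the full Kruzhkov doubling procedure from \cite{kar3} between the two solutions $v$ and $c$, explicitly citing the crossing condition for $f_\alpha,g_\beta$ to control the interface terms and arrive at \eqref{comp_1}. You instead simply substitute $\xi=c$ into the single inequality \eqref{sep827} and observe that the interface measure $|f_\alpha(c)-g_\beta(c)|\delta(x)=|f(B)-g(A)|\delta(x)$ vanishes by the connection identity alone. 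This is cleaner: it isolates precisely which hypothesis does what, and in particular shows that the crossing condition is not actually needed to derive (D.3)---only the connection identity $f(B)=g(A)$ is. The paper's approach, by contrast, packages (D.3) as a special case of the general $L^1$-contraction machinery, which is heavier but sets up the later uniqueness arguments.

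For (D.1) and (D.2) the paper only remarks that they ``are easily checked''; your explicit verification via one-sided test functions and the bijection $\alpha:[a,b]\to[a,b]$ (so that $\eta=\alpha(\xi)$ ranges over all of $[a,b]$) supplies exactly the missing details. The only place to be slightly more careful is the recovery of the initial-data terms: you should either note that \eqref{sep827}, being derived as a vanishing-viscosity limit, already holds with test functions in $\mathcal D([0,T)\times\R)$ (so the $t=0$ boundary term appears automatically), or cite the standard argument that a Kruzhkov entropy solution on the open set attains its datum in $L^1_{\rm loc}$.
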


\begin{proof}
First, notice that, according to the choice of $\alpha$ and $\beta$,
the function $c^{AB}$ will represent an $(\alpha,\beta)$-entropy
admissible solution to \eqref{oslo3} in the sense of Definition
\ref{def-adm}. Taking another $(\alpha,\beta)$-entropy admissible
solution to \eqref{oslo3}, say $u={\alpha}(v)H(x)+{\beta}(v)H(-x)$,
and applying the procedure from \cite{kar3} leading to
\cite[(2.34)]{kar3} (keep in mind that $f_\alpha$ and $g_\beta$
satisfy the crossing conditions), we reach to the following (well
known) relation:
\begin{align}
\label{comp_1} &\pa_t {\rm
sgn}(v-c)\left(H(x)({\alpha}(v)-\alpha(c))+H(-x)({\beta}(v)-\beta(c))
\right)\\& + \pa_x {\rm
sgn}(v-c)\left(H(x)(f_{\alpha}(v)-f_\alpha(c))+H(-x)(g_{\beta}(v)-g_\beta(c))
\right)\leq 0. \nonumber
\end{align} Since $\alpha$ and $\beta$ as well as their inverses $\tilde{\alpha}$ and $\tilde{\beta}$ are increasing bijections, it holds
$$
{\rm sgn}(v-c)={\rm
sgn}\left((\tilde{\alpha}(u)-\tilde{\alpha}(B))H(x)+(\tilde{\beta}(u)-\tilde{\beta}(A))H(x)\right)={\rm
sgn}(u-c^{AB}).
$$From here, we see that \eqref{comp_1} is actually condition (D.3)
from Definition \ref{kenneth_2} meaning that the
$(\alpha,\beta)$-entropy admissible solution $u$ is, at the same
time, an entropy solution of type $(A,B)$ (conditions (D.1.) and
(D.2.) from Definition \ref{kenneth_2} are easily checked).
\end{proof}

\begin{remark}
\label{refrem} The notion of connection used in \cite{kar4} relied
on the case when the functions $f$ and $g$ forming the flux in
\eqref{oslo3} were such that they admit unique local maxima points
$u^*_f\in (a,b)$ and $u^*_g\in (a,b)$, respectively. Then, the pair
$(A,B)$ is called a connection if
\begin{equation}
\label{conn} f(B)=g(A) \ \ \text{with} \ \ u_g^*\leq A \leq b, \ \
a\leq B \leq u_f^*.
\end{equation} In this
case, we can always find functions $\alpha$ and $\beta$ such that
conditions of Theorem \ref{equiv} are satisfied.

Indeed, assume that $u^*_f<u^*_g$ (other two situations
$u^*_f>u^*_g$ and $u^*_f=u^*_g$ can be resolved similarly). Denote
by $\tilde{\alpha}$ and $\tilde{\beta}$ inverse functions to the
functions $\alpha$ and $\beta$, respectively. Choose
$\tilde{\alpha}$ and $\tilde{\beta}$ on the intervals $[u^*_f,b]$
and $[a,u_g^*]$ to be linear and such that $\tilde{\alpha}(u^*_f)
>c> \tilde{\beta}(u^*_g)$ (see Figure \ref{2}; the situation plotted there is more
general but completely analogical with the one we are considering at
the moment).

To extend the function $\tilde{\alpha}$ in the interval $[a,u_f^*]$,
we will construct its inverse $\alpha$ in the interval $[a,c]$. Take
an arbitrary decreasing function $\hat{\alpha}$ connecting the
points $(a,0)$ and $(c,f_\alpha(c))$ such that $\hat{\alpha}\leq
g_\beta$ on $[a,c]$. This is always possible since $g_\beta >0$ on
$(a,c)$; for instance, we can take $\hat{\alpha}$ to be the convex
hull of $g_\beta$ on $[a,c]$. Then, put
$\alpha=f^{-1}\circ\hat{\alpha}$ i.e. $\tilde{\alpha}=\alpha^{-1}$
on $[a,u_f^*]$ (this is permitted since $f^{-1}$ is monotonic on
$(a,c)$). We choose $\tilde{\beta}$ on $[u_g^*,b]$ in the completely
same manner (see Figure \ref{2} for further clarification). It is
clear that $\alpha=\tilde{\alpha}^{-1}$ and
$\beta=\tilde{\beta}^{-1}$ chosen in such a way satisfy conditions
of Theorem \ref{equiv}.

Actually, from the latter discussion, we can conclude that the
conditions given in Theorem \ref{equiv} are a generalization of the
notion of connection. More precisely, we can say that a pair $(A,B)$
is a connection if there exist functions $\alpha$ and $\beta$
satisfying conditions of Theorem \ref{equiv}. As we shall see in
Theorem \ref{th-main}, such conditions provide existence and
uniqueness of the $(\alpha,\beta)$-entropy admissible solution to
\eqref{oslo3}. In particular, the function $B H(x)+A H(-x)$ will be
the $(\alpha,\beta)$-entropy admissible shock.

Also, remark that conditions \eqref{conn} can be naturally
generalized by assuming that
\begin{equation}
\label{conn1}
 f(B)=g(A) \ \ \text{with} \ \ A\in (u^*_g, b), \;  B\in (a, u^*_f),
\end{equation} where $u^*_g$ and $u^*_f$ are the rear right local maximum of
the function $g$ and the rear left local maximum of the function
$f$, respectively. Repeating the procedure from the beginning of the
remark, we can find the function $\alpha$ and $\beta$ such that
conditions of Theorem \ref{equiv} are satisfied (see Figure
\ref{2}).

Finally, notice that if $A=u^*_g$ and $B=u^*_f$, we cannot state
that the functions $\alpha$ and $\beta$ satisfying conditions of
Theorem \ref{equiv} exist (for instance, if the functions $f$ and
$g$ have several local maxima, and all of them have the same
values).

\end{remark}

The following theorem is the main theorem of the section:

\begin{theorem}
\label{th-main} There exists a pair of function $(\alpha,\beta)$
from Definition \ref{def-adm} such that there exists a unique
$(\alpha,\beta)$-entropy admissible solution to \eqref{oslo3}.

For such $\alpha$ and $\beta$ any two $(\alpha,\beta)$-entropy
admissible solutions $u$ and $v$ to \eqref{oslo3} satisfy
\eqref{new_version}.
\end{theorem}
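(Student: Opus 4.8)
The plan is to obtain existence through a vanishing viscosity limit tailored by the pair $(\alpha,\beta)$, and to deduce uniqueness together with the stability estimate \eqref{new_version} by reducing everything to Theorem \ref{kenn}. Throughout one works under the standing assumption \eqref{bergen_11}; its role is twofold, namely to make the crossing condition enforceable and to supply enough nondegeneracy for a strong compactness argument.

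\textbf{Choice of $(\alpha,\beta)$ and existence.} First I would fix smooth strictly increasing bijections $\tilde{\alpha},\tilde{\beta}$ (with inverses $\alpha,\beta$) so that the transformed fluxes $f_\alpha=f\circ\alpha$ and $g_\beta=g\circ\beta$ satisfy the crossing condition on the range $[a,b]$; this is exactly the construction carried out in Remark \ref{refrem} and illustrated in Figure \ref{2}, and it is precisely the strict positivity of $f,g$ on subintervals of $(a,b)$ demanded in \eqref{bergen_11} that guarantees such a choice exists. Assuming first $u_0\in BV(\R)$, I would regularize as in \eqref{oslo3vv}, equivalently \eqref{oslo3vv1}, and produce solutions $u_\varepsilon$. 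Since $f(a)=f(b)=g(a)=g(b)=0$, the constants $a,b$ are stationary solutions and the maximum principle gives $a\leq u_\varepsilon\leq b$. The key a priori bound is the time--derivative estimate of \cite{kar1,MA}: for $u_0\in BV(\R)$ one has $\|\pa_t u_\varepsilon\|_{L^1}\leq C$ uniformly in $\varepsilon$, so $(|\pa_t u_\varepsilon|)$ is bounded in the space of Radon measures.

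\textbf{Strong compactness and admissibility.} With this bound the entropy dissipation
\[
\pa_t\big(H(u_\varepsilon-c)(H(x)(h_R(u_\varepsilon)-h_R(c))+H(-x)(h_L(u_\varepsilon)-h_L(c)))\big)+\pa_x(\cdots)
\]
is precompact in $W^{-1,2}_{\rm loc}(\R^+\times\R)$ for every Lipschitz pair $(h_L,h_R)$, as explained before Definition \ref{non-deg}. Under \eqref{bergen_11} I can choose auxiliary Lipschitz functions $h_L,h_R$ so that the vector $(H(x)h_R(u)+H(-x)h_L(u),\,H(x)f(u)+H(-x)g(u))$ is genuinely nonlinear in the sense of Definition \ref{non-deg}; the freedom in $h_L,h_R$ is used to destroy any affine segments of $f,g$, which is where this section's hypothesis replaces the truncation argument needed in the general case. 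Theorem \ref{tcrucial} then yields a subsequence of $(u_\varepsilon)$ converging in $L^1_{\rm loc}(\R^+\times\R)$ to a limit $u$. Passing to the limit shows that $u$ is a weak solution of \eqref{oslo3} and that the viscous entropy inequalities for \eqref{oslo3vv1} survive in the limit as the $(\alpha,\beta)$-entropy inequality \eqref{sep827}, with the interfacial term $|f_\alpha(\xi)-g_\beta(\xi)|\delta(x)$ arising from the jump of the flux at $x=0$; hence $u$ is $(\alpha,\beta)$-admissible. Theorem \ref{existenceoftraces} applies because \eqref{bergen_11} makes the relevant components nonconstant on every interval, so $u$ admits strong one--sided traces at $x=0$. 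Finally the $BV$ restriction on $u_0$ is removed by density, the solution being propagated by the contraction estimate established below.

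\textbf{Uniqueness and stability.} For the second assertion take two $(\alpha,\beta)$-admissible solutions $u,v$ for the same $\alpha,\beta$. In the transformed variable $w=\tilde{\alpha}(\cdot)H(x)+\tilde{\beta}(\cdot)H(-x)$, condition (D.2) is exactly the Kruzhkov-type entropy inequality of Definition \ref{kar1} for the equation with flux $(f_\alpha,g_\beta)$, and by the Remark following Theorem \ref{kenn} that theorem remains valid when $\pa_t u$ is replaced by $\pa_t(\alpha(\cdot)H(x)+\beta(\cdot)H(-x))$. Since both solutions admit strong traces at $x=0$ and $(f_\alpha,g_\beta)$ satisfies the crossing condition, Theorem \ref{kenn} delivers \eqref{new_version}; taking $u_0=v_0$ gives uniqueness. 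The delicate point of the whole argument is the simultaneous fulfilment of two a priori competing requirements: the crossing condition, which constrains $\alpha,\beta$ through the shape of $f_\alpha,g_\beta$, and genuine nonlinearity, which is needed for the strong $L^1_{\rm loc}$ convergence of $(u_\varepsilon)$. Hypothesis \eqref{bergen_11} is exactly what lets me decouple them; equally technical is the verification, in the transformed setting \eqref{oslo3vv1}, of the uniform bound $\|\pa_t u_\varepsilon\|_{L^1}\leq C$ that substitutes for the $BV$-in-$x$ estimates unavailable for a discontinuous flux.
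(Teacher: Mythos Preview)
Your overall architecture matches the paper's: choose $(\alpha,\beta)$ so that $f_\alpha,g_\beta$ cross correctly, regularize, use the uniform $L^1$ bound on $\pa_t v_\varepsilon$ together with Theorem \ref{tcrucial} to extract a strong $L^1_{\rm loc}$ limit, verify admissibility, obtain traces via Theorem \ref{existenceoftraces}, and finish with Theorem \ref{kenn}; the non-$BV$ case is then handled by density exactly as you say.

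There is one genuine tactical difference worth flagging. You invoke the freedom of choosing abstract Lipschitz functions $h_L,h_R$ to force genuine nonlinearity of the pair $(H(x)h_R+H(-x)h_L,\,H(x)f+H(-x)g)$; this is indeed the heuristic from the introduction, but the paper's actual proof never does this. Instead it makes one concrete choice, namely $\bar q$ built from $f_\alpha^2$ and $g_\beta^2$ (see \eqref{entr_adapt} and Lemma \ref{11}). The point is that under \eqref{bergen_11} the vector $(f_\alpha^2(\lambda),f_\alpha(\lambda))$ is automatically genuinely nonlinear, since $\xi_0 f_\alpha^2+\xi_1 f_\alpha$ constant on an interval forces $f_\alpha$ constant there. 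This single explicit choice simultaneously feeds Theorem \ref{tcrucial} (strong compactness) and, after passing to the limit, shows that $v$ is a quasi-solution of \eqref{bgr_10}, which is exactly the equation to which Theorem \ref{existenceoftraces} is applied. Your version works in principle, but you leave two loose ends that the paper's device closes for free: you do not exhibit a specific $h_L,h_R$ (and ``destroy any affine segments of $f,g$'' is not quite the right description, since the obstruction is $h$ being affine in $f$ on an interval), and you do not say to which equation you apply the trace theorem. A second, purely technical point: the paper regularizes via \eqref{aug2817}, i.e.\ with the Heaviside smoothed to $H_\varepsilon$, rather than \eqref{oslo3vv} with the raw $H$; this is what makes Lemmas \ref{Lbbound}--\ref{entrbound} and the derivation \eqref{mur_1}--\eqref{mur_3} go through cleanly.
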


Before we prove the theorem, we shall need several auxiliary
statements and explanations.

In order to construct an $(\alpha,\beta)$-entropy admissible
solution to \eqref{oslo3}, we use a non-standard vanishing viscosity
approximation with regularized flux. First, introduce the following
change of the unknown function $u$:
$$
u(t,x)=\tilde{\alpha}(v(t,x))H(x)+\tilde{\beta}(v(t,x))H(-x),
$$ for increasing functions $\tilde{\alpha}, \tilde{\beta}:[a,b]\to [a,b]$. Denote by $\alpha=\tilde{\alpha}^{-1}$
and $\beta=\tilde{\beta}^{-1}$. Equation \eqref{oslo3} becomes:
\begin{equation}
\label{oslo3'} \pa_t \left( H(x)\alpha(v)+H(-x)\beta(v)
\right)+\pa_x\left(H(x)f_\alpha(v)+H(-x)g_\beta(v) \right)=0.
\end{equation}
Then, take the following regularization of the Heaviside function
$H$, $H_{\varepsilon}(x)=\int_{-\infty}^{x/\varepsilon}\omega(z)dz$,
where $\omega$ is a smooth even compactly supported function with
total mass one. Let $\chi_\varepsilon$ be a smooth function equal to
one in the interval $(-1/\varepsilon,1/\varepsilon)$ and zero out of
the interval $(-2/\varepsilon,2/\varepsilon)$. Consider the
following regularized problem:
\begin{equation}
\label{aug2817} \begin{split} &\pa_t
\left(H_{\varepsilon}(x)\alpha(v_\varepsilon)+H_{\varepsilon}(-x)\beta(v_\varepsilon)
\right)\\&+\pa_x\left(H_{\varepsilon}(x)f_\alpha(v_\varepsilon)+H_{\varepsilon}(-x)g_\beta(v_\varepsilon)
\right)=\varepsilon\pa_{xx}v_\varepsilon\\
&v_\varepsilon\big|_{t=0}=(\tilde{\alpha}(u_0)H(x)+\tilde{\beta}(u_0)H(-x))\star
\frac{1}{\varepsilon}\omega(\cdot/\varepsilon)\chi_\varepsilon(x)
\end{split}
\end{equation}

Obviously, for every fixed $\varepsilon>0$ quasilinear parabolic
Cauchy problem \eqref{aug2817} will have a unique smooth solution
$v_{\varepsilon}$.

Since $\alpha$ and $\beta$ are strictly increasing functions which
map interval $[a,b]$ into itself, slightly modifying the methodology
from \cite{kar1}, we obtain the following three lemmas.

\begin{lemma}\cite[Lemma 4.1]{kar1}
\label{Lbbound}[$L^\infty$-bound] There exists constant $c_0>0$ such
that for all $t\in(0,T)$,
$$
\|v_\varepsilon(t,\cdot)\|_{L^\infty(\R)}\leq c_0.
$$More precisely,
$$
a\leq v_\varepsilon \leq b.
$$
\end{lemma}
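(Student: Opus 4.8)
The plan is to establish the invariant region $[a,b]$ by a parabolic comparison argument for each fixed $\varepsilon>0$, the key structural input being that the endpoints $a$ and $b$ are \emph{exact stationary solutions} of the regularized equation \eqref{aug2817}.

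First I would record the algebraic facts that make this work. Since $\tilde{\alpha},\tilde{\beta}:[a,b]\to[a,b]$ are increasing bijections, so are their inverses $\alpha,\beta$; in particular $\alpha(a)=\beta(a)=a$ and $\alpha(b)=\beta(b)=b$, whence $f_\alpha(a)=f(\alpha(a))=f(a)=0$ and, likewise, $f_\alpha(b)=g_\beta(a)=g_\beta(b)=0$, using $f(a)=f(b)=g(a)=g(b)=0$. Consequently the constant functions $v\equiv a$ and $v\equiv b$ solve \eqref{aug2817}: the time derivative vanishes, the flux $H_\varepsilon(x)f_\alpha(v)+H_\varepsilon(-x)g_\beta(v)$ vanishes identically, and $\varepsilon\pa_{xx}v=0$. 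The regularized initial datum is valued in $[a,b]$ because it is a mollification (with a nonnegative kernel of unit mass) of the $[a,b]$-valued function $\tilde{\alpha}(u_0)H(x)+\tilde{\beta}(u_0)H(-x)$.

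Next I would put \eqref{aug2817} in nondivergence quasilinear form. Writing $\Phi(x,v)=H_\varepsilon(x)\alpha(v)+H_\varepsilon(-x)\beta(v)$, one has $\pa_t\Phi(x,v_\varepsilon)=\Phi_v(x,v_\varepsilon)\pa_t v_\varepsilon$ with $\Phi_v=H_\varepsilon(x)\alpha'(v)+H_\varepsilon(-x)\beta'(v)$. Since $\omega$ is even, $H_\varepsilon(x)+H_\varepsilon(-x)=1$, so $\Phi_v$ is a convex combination of $\alpha'>0$ and $\beta'>0$ and is therefore bounded below by a positive constant; the equation is uniformly parabolic for each fixed $\varepsilon$. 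Expanding the flux divergence, the only term carrying an explicit $x$-derivative of $H_\varepsilon$ is $\tfrac1\varepsilon\omega(x/\varepsilon)\bigl(f_\alpha(v_\varepsilon)-g_\beta(v_\varepsilon)\bigr)$. The crucial point is that $f_\alpha(b)-g_\beta(b)=0$, so for $z:=v_\varepsilon-b$ a first-order Taylor expansion gives $f_\alpha(v_\varepsilon)-g_\beta(v_\varepsilon)=c(t,x)\,z$ with $c$ bounded on $[a,b]$; this interface term is thus not a genuine forcing but a zeroth-order term proportional to the deviation from $b$.

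I would then conclude by the maximum principle. Dividing by $\Phi_v>0$, the function $z=v_\varepsilon-b$ satisfies a linear parabolic equation $\pa_t z=\tfrac{\varepsilon}{\Phi_v}\pa_{xx}z-\tfrac{\Psi_v}{\Phi_v}\pa_x z-d(t,x)z$, where $\Psi_v=H_\varepsilon(x)f_\alpha'(v_\varepsilon)+H_\varepsilon(-x)g_\beta'(v_\varepsilon)$, with all coefficients bounded for fixed $\varepsilon$ and $z|_{t=0}\le 0$. Passing to $\tilde z=e^{-\lambda t}z$ with $\lambda>\|d\|_\infty$ removes the unfavourable sign of the zeroth-order term, and at a would-be positive interior maximum the relations $\pa_x\tilde z=0$, $\pa_{xx}\tilde z\le 0$, $\pa_t\tilde z\ge 0$ produce a contradiction; hence $z\le 0$, i.e. $v_\varepsilon\le b$. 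The same argument applied to $a-v_\varepsilon$ (using $f_\alpha(a)=g_\beta(a)=0$) gives $v_\varepsilon\ge a$. The main technical obstacle is twofold: verifying the $\varepsilon$-uniformity of the interval $[a,b]$ --- which is exactly what the identities $f(a)=f(b)=g(a)=g(b)=0$ buy us, since they turn the otherwise $O(1/\varepsilon)$ interface term into a term proportional to $z$ --- and running the maximum principle on the unbounded line $\R$; the latter is handled by noting that $v_\varepsilon$ is a priori bounded (parabolic existence theory) and comparing against the barriers $\pm\eta(1+x^2+Kt)$, then letting $\eta\downarrow 0$.
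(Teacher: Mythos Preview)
The paper does not supply a proof of this lemma in the text; it is stated as a direct citation of \cite[Lemma~4.1]{kar1}, with only the remark that ``slightly modifying the methodology from \cite{kar1}'' yields it. Your argument---observing that $\alpha(a)=\beta(a)=a$, $\alpha(b)=\beta(b)=b$ force $f_\alpha,g_\beta$ to vanish at both endpoints so that the constants $a$ and $b$ are exact stationary solutions of \eqref{aug2817}, and then running a parabolic comparison/maximum principle---is correct and is precisely the kind of argument the cited reference carries out. One small technical point you leave implicit: since $\alpha,\beta,f_\alpha,g_\beta$ are a priori defined only on $[a,b]$, to make the linearization around $v=b$ and the coefficient bounds legitimate before you know $v_\varepsilon\in[a,b]$, one should first extend $\alpha,\beta$ to smooth increasing bijections of $\R$ (and $f,g$ smoothly), apply the comparison principle to the extended problem, and then note a posteriori that the extension was irrelevant; this is routine but worth a sentence.
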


\begin{lemma} \cite[Lemma 4.2]{kar1}
\label{regintime} [Lipshitz regularity in time] Assume that the
initial function $u_0$ from \eqref{oslo3} has bounded variation.
Then, there exists constant $c_1$, independent of $\varepsilon$,
such that for all $t>0$,
$$
\int_{\R}|\partial_t v_\varepsilon(\cdot, t)|\,dx\leq c_1.
$$
\end{lemma}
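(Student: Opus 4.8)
The plan is to exploit that the regularized problem \eqref{aug2817} is autonomous in $t$, so that every time translate $v_\varepsilon(\cdot\,,\,t+h)$ solves the same equation, and to establish an $L^1$-contraction principle for \eqref{aug2817} written in conservative form. Put $A_\varepsilon(x,v)=H_\varepsilon(x)\alpha(v)+H_\varepsilon(-x)\beta(v)$ and $Q_\varepsilon(x,v)=H_\varepsilon(x)f_\alpha(v)+H_\varepsilon(-x)g_\beta(v)$, so that \eqref{aug2817} reads $\pa_t A_\varepsilon(x,v_\varepsilon)+\pa_x Q_\varepsilon(x,v_\varepsilon)=\varepsilon\pa_{xx}v_\varepsilon$. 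Two structural facts will be used throughout. First, since $\omega$ is even with unit mass, $H_\varepsilon(x)+H_\varepsilon(-x)=1$, hence $\pa_v A_\varepsilon(x,v)=H_\varepsilon(x)\alpha'(v)+H_\varepsilon(-x)\beta'(v)$ is a convex combination of the strictly positive, bounded quantities $\alpha'$ and $\beta'$, giving $0<m\le \pa_v A_\varepsilon\le M$ uniformly in $\varepsilon$. Second, $A_\varepsilon(x,\cdot)$ is strictly increasing, so ${\rm sgn}(v-\bar v)={\rm sgn}\big(A_\varepsilon(x,v)-A_\varepsilon(x,\bar v)\big)$ for any two states $v,\bar v$.

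To prove the contraction, let $\bar v_\varepsilon:=v_\varepsilon(\cdot\,,\,\cdot+h)$, subtract the two equations, multiply by $S_\eta(v_\varepsilon-\bar v_\varepsilon)$ with $S_\eta$ a smooth nondecreasing approximation of ${\rm sgn}$, and integrate over $x\in\R$. As $\eta\to0$, the time term tends to $\frac{d}{dt}\int_\R|A_\varepsilon(x,v_\varepsilon)-A_\varepsilon(x,\bar v_\varepsilon)|\,dx$ by the sign identity above; the viscous term, after one integration by parts, equals $-\varepsilon\int_\R S_\eta'(v_\varepsilon-\bar v_\varepsilon)\,|\pa_x(v_\varepsilon-\bar v_\varepsilon)|^2\,dx\le0$ and is kept on the right with the favourable sign; and the flux term, integrated by parts into $-\int_\R S_\eta'(v_\varepsilon-\bar v_\varepsilon)\,\pa_x(v_\varepsilon-\bar v_\varepsilon)\,\big(Q_\varepsilon(x,v_\varepsilon)-Q_\varepsilon(x,\bar v_\varepsilon)\big)\,dx$, tends to $0$ because $S_\eta'$ concentrates on $\{v_\varepsilon=\bar v_\varepsilon\}$, where the difference $Q_\varepsilon(x,v_\varepsilon)-Q_\varepsilon(x,\bar v_\varepsilon)$ vanishes. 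This is precisely where the discontinuous-flux difficulty dissolves: although $\pa_x Q_\varepsilon$ carries the singular factor $H_\varepsilon'\sim1/\varepsilon$, only the $x$-continuous increment $Q_\varepsilon(x,v_\varepsilon)-Q_\varepsilon(x,\bar v_\varepsilon)$ appears, so the singular contributions cancel. We conclude $\frac{d}{dt}\int_\R|A_\varepsilon(x,v_\varepsilon(t+h))-A_\varepsilon(x,v_\varepsilon(t))|\,dx\le0$; dividing by $h$ and letting $h\to0$ yields $\int_\R|\pa_t A_\varepsilon(x,v_\varepsilon)(t,x)|\,dx\le\int_\R|\pa_t A_\varepsilon(x,v_\varepsilon)(0,x)|\,dx$ for every $t>0$.

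The genuinely delicate step, which I expect to be the main obstacle, is the uniform-in-$\varepsilon$ bound on the initial time-derivative on the right-hand side. Evaluating the equation at $t=0$ gives $\pa_t A_\varepsilon(x,v_\varepsilon)(0,\cdot)=-\pa_x Q_\varepsilon(x,v_\varepsilon(0,\cdot))+\varepsilon\pa_{xx}v_\varepsilon(0,\cdot)$, so it suffices to bound $\int_\R|\pa_x Q_\varepsilon(x,v_\varepsilon(0))|\,dx$ and $\varepsilon\int_\R|\pa_{xx}v_\varepsilon(0)|\,dx$. This is where $u_0\in BV$ and the precise preparation of the data enter. Setting $\Psi=\tilde\alpha(u_0)H(x)+\tilde\beta(u_0)H(-x)$, which has bounded variation since $u_0$ does, the mollified datum satisfies $\int_\R|\pa_x v_\varepsilon(0)|\,dx\le{\rm TV}(\Psi)$ and $\varepsilon\int_\R|\pa_{xx}v_\varepsilon(0)|\,dx\le{\rm TV}(\Psi)\,\|\omega'\|_{L^1}$ (the cutoff $\chi_\varepsilon$ only alters things on the scale $1/\varepsilon$, where $\Psi$ is essentially constant, and contributes negligibly). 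The transport part of $\pa_x Q_\varepsilon$ is then controlled by ${\rm TV}(\Psi)\big(\sup|f_\alpha'|+\sup|g_\beta'|\big)$, while the only dangerous pieces, those carrying $H_\varepsilon'\sim1/\varepsilon$, namely $\tfrac1\varepsilon\omega(x/\varepsilon)\big(f_\alpha(v_\varepsilon(0))-g_\beta(v_\varepsilon(0))\big)$, integrate after the substitution $x=\varepsilon y$ to $\int\omega(y)\,\big|f_\alpha-g_\beta\big|(v_\varepsilon(0,\varepsilon y))\,dy\le\sup|f_\alpha-g_\beta|$, again uniformly in $\varepsilon$. Tracking these concentrations of $H_\varepsilon'$ and of the mollifier exactly against the finite total variation of $\Psi$ is the crux. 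Finally, combining the contraction with this initial bound and using $|\pa_t v_\varepsilon|=|\pa_t A_\varepsilon(x,v_\varepsilon)|/\pa_v A_\varepsilon\le|\pa_t A_\varepsilon(x,v_\varepsilon)|/m$, we obtain $\int_\R|\pa_t v_\varepsilon(t,\cdot)|\,dx\le\frac1m\int_\R|\pa_t A_\varepsilon(x,v_\varepsilon)(0,\cdot)|\,dx\le c_1$ with $c_1$ independent of $\varepsilon$, which is the asserted estimate.
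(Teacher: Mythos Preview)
The paper does not give its own proof of this lemma; it simply cites \cite[Lemma~4.2]{kar1} after remarking that ``slightly modifying the methodology from \cite{kar1}'' yields Lemmas~\ref{Lbbound}--\ref{entrbound}. Your argument \emph{is} that methodology: exploit the $t$-autonomy of \eqref{aug2817} to compare $v_\varepsilon$ with its time translate, derive an $L^1$-contraction for the conserved quantity $A_\varepsilon(x,v_\varepsilon)$ via a ${\rm sgn}$-approximation (the point being that the singular $H_\varepsilon'$ terms cancel in the difference $Q_\varepsilon(x,v)-Q_\varepsilon(x,\bar v)$), and then bound $\int_\R|\partial_t A_\varepsilon(x,v_\varepsilon)(0,\cdot)|\,dx$ by reading the equation at $t=0$ and using $u_0\in BV$ together with the scaling of the mollifier. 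So your proposal is correct and is precisely the approach the paper defers to.

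One small caveat worth making explicit: the final conversion $|\partial_t v_\varepsilon|\le |\partial_t A_\varepsilon|/m$ needs $\alpha',\beta'\ge m>0$ on $[a,b]$. The paper only says ``smooth strictly increasing bijections'', which does not literally force a positive lower bound on the derivative; however, well-posedness of the quasilinear parabolic problem \eqref{aug2817} already requires $\partial_v A_\varepsilon>0$, and on the compact range $[a,b]$ this gives the uniform $m$. It is harmless but worth stating.
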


\begin{lemma} \cite[Lemma 4.3]{kar1}
\label{entrbound}[Entropy dissipation bound] There exists a constant
$c_2$ independent from $\varepsilon$ such that
$$
\varepsilon\int_{\R}\left( \pa_xv_\varepsilon(t,x)\right)^2dx\leq
c_2,
$$
for all $t>0$.
\end{lemma}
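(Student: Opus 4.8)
The plan is to test the regularized equation \eqref{aug2817} against $v_\varepsilon$ itself, integrate in $x$ over $\R$, and isolate the dissipation $\varepsilon\int_\R(\pa_x v_\varepsilon)^2\,dx$ on one side while bounding everything else by constants independent of $\varepsilon$ and $t$. Writing $A_\varepsilon(x,v)=H_\varepsilon(x)\alpha(v)+H_\varepsilon(-x)\beta(v)$ and $F_\varepsilon(x,v)=H_\varepsilon(x)f_\alpha(v)+H_\varepsilon(-x)g_\beta(v)$, I would multiply by $v_\varepsilon$ and integrate by parts in the viscous term (the boundary contribution $\varepsilon[v_\varepsilon\pa_x v_\varepsilon]_{-\infty}^{+\infty}$ vanishes because the smooth parabolic solution tends to constant states with $\pa_x v_\varepsilon\to0$ at $x=\pm\infty$), obtaining
\begin{equation*}
\varepsilon\int_\R(\pa_x v_\varepsilon)^2\,dx=-\int_\R v_\varepsilon\,\pa_t A_\varepsilon(x,v_\varepsilon)\,dx-\int_\R v_\varepsilon\,\pa_x F_\varepsilon(x,v_\varepsilon)\,dx.
\end{equation*}
Since the left-hand side is nonnegative, it suffices to bound the absolute value of each term on the right by a constant independent of $\varepsilon$ and $t$.

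For the time term, the chain rule gives $\pa_t A_\varepsilon(x,v_\varepsilon)=[H_\varepsilon(x)\alpha'(v_\varepsilon)+H_\varepsilon(-x)\beta'(v_\varepsilon)]\pa_t v_\varepsilon$. Using the $L^\infty$-bound $a\le v_\varepsilon\le b$ of Lemma \ref{Lbbound} (so that $v_\varepsilon,\alpha'(v_\varepsilon),\beta'(v_\varepsilon)$ are uniformly bounded while $0\le H_\varepsilon\le1$), I get
\begin{equation*}
\Big|\int_\R v_\varepsilon\,\pa_t A_\varepsilon(x,v_\varepsilon)\,dx\Big|\le C\int_\R|\pa_t v_\varepsilon|\,dx\le C c_1,
\end{equation*}
by the time-regularity estimate of Lemma \ref{regintime}. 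This is the decisive point: it is the uniform $L^1$ bound on $\pa_t v_\varepsilon$ that converts what would otherwise be merely a time-integrated dissipation estimate into the pointwise-in-time bound claimed.

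For the flux term, I would split $\pa_x F_\varepsilon$ into a concentrating part and a chain-rule part, using that $\omega$ is even (so that $\tfrac{d}{dx}H_\varepsilon(-x)=-H_\varepsilon'(x)$),
\begin{equation*}
\pa_x F_\varepsilon(x,v_\varepsilon)=H_\varepsilon'(x)\big[f_\alpha(v_\varepsilon)-g_\beta(v_\varepsilon)\big]+\big[H_\varepsilon(x)f_\alpha'(v_\varepsilon)+H_\varepsilon(-x)g_\beta'(v_\varepsilon)\big]\pa_x v_\varepsilon.
\end{equation*}
Because $H_\varepsilon'(x)=\tfrac1\varepsilon\omega(x/\varepsilon)$ has total mass one and $v_\varepsilon[f_\alpha(v_\varepsilon)-g_\beta(v_\varepsilon)]$ is uniformly bounded by Lemma \ref{Lbbound}, the first piece contributes at most a constant. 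For the second piece I would introduce primitives $\Phi'(v)=v f_\alpha'(v)$, $\Psi'(v)=v g_\beta'(v)$, so that $v_\varepsilon f_\alpha'(v_\varepsilon)\pa_x v_\varepsilon=\pa_x\Phi(v_\varepsilon)$ and similarly for $\Psi$; integrating by parts in $x$ throws the derivative onto $H_\varepsilon(\pm x)$ and produces again $H_\varepsilon'$-concentration integrals of the bounded quantities $\Phi(v_\varepsilon),\Psi(v_\varepsilon)$, together with boundary terms at $x=\pm\infty$ controlled by $\max_{[a,b]}|\Phi|$ and $\max_{[a,b]}|\Psi|$. Collecting these estimates yields $\varepsilon\int_\R(\pa_x v_\varepsilon)^2\,dx\le c_2$ with $c_2$ depending only on $c_0,c_1$ and on the fixed functions $\alpha,\beta,f_\alpha,g_\beta$, but not on $\varepsilon$ or $t$.

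The only genuinely delicate point is the uniform control of the singular factor $H_\varepsilon'=\tfrac1\varepsilon\omega(\cdot/\varepsilon)$, which blows up pointwise as $\varepsilon\to0$; this is harmless precisely because it always appears multiplied by quantities that are bounded uniformly in $\varepsilon$ (by Lemma \ref{Lbbound}) and is integrated against its unit mass. A secondary technical issue is the vanishing, respectively boundedness, of the boundary terms at $x=\pm\infty$, which follows from the decay of $v_\varepsilon$ and $\pa_x v_\varepsilon$ built into the smooth solution of the parabolic problem \eqref{aug2817}.
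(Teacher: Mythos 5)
Your proof is correct and follows essentially the same route as the paper's source for this lemma (\cite[Lemma 4.3]{kar1}, to which the paper simply defers): multiply \eqref{aug2817} by $v_\varepsilon$, integrate by parts in the viscous term, control the time term through the uniform $L^1$ bound on $\pa_t v_\varepsilon$ from Lemma \ref{regintime}, and absorb the singular factor $H_\varepsilon'=\frac{1}{\varepsilon}\omega(\cdot/\varepsilon)$ by its unit mass against quantities bounded via Lemma \ref{Lbbound}. Note only that your argument, like the cited one, implicitly requires $u_0\in BV(\R)$ (through Lemma \ref{regintime}), which is the standing assumption in the setting where this lemma is used.
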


To proceed, we need Murat's lemma:
\begin{lemma}
\cite{Eva} \label{mur_lemma} Assume that the family
$(Q_\varepsilon)$ is bounded in $L^p(\Omega)$, $\Omega \subset
\R^d$, $p>2$.

Then,
$$
(\Div Q_\varepsilon)_\varepsilon\in W^{-1,2}_{\rm c,loc} \ \
\text{if} \ \ \Div Q_\varepsilon=p_\varepsilon+q_\varepsilon,
$$
with $(q_\varepsilon)_\varepsilon\in W^{-1,2}_{\rm c,loc}(\Omega)$
and $(p_\varepsilon)_\varepsilon\in {\cal M}_{b,loc}(\Omega)$.
\end{lemma}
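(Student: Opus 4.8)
The plan is to prove relative compactness of $(\Div Q_\varepsilon)$ in $W^{-1,2}_{\rm loc}(\Omega)$ by interpolating between a ``high'' exponent, where only boundedness is available, and a ``low'' exponent, where strong convergence can already be extracted. Throughout I fix a bounded open $\Omega'\subset\subset\Omega$ (relatively compact, smooth boundary) and establish precompactness in $W^{-1,2}(\Omega')$; since $\Omega'$ is arbitrary this gives the claim. First I would record the \emph{high-integrability reserve}: because $(Q_\varepsilon)$ is bounded in $L^p(\Omega')$ and the divergence is a bounded linear operator $\Div:L^p(\Omega')\to W^{-1,p}(\Omega')$ (indeed $\|\Div Q\|_{W^{-1,p}}\le\|Q\|_{L^p}$ by pairing against $W^{1,p'}_0$), the family $(\Div Q_\varepsilon)$ is bounded in $W^{-1,p}(\Omega')$ with $p>2$. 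This is the only place where the $L^p$-bound with $p>2$ is used, and it is exactly the reserve that makes $2$ an \emph{interior} interpolation exponent.

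Next I would handle the two summands at a \emph{low} exponent $q<2$. Fix $q\in(1,2)$ whose conjugate exponent $q'=q/(q-1)$ satisfies $q'>d$ (possible by taking $q$ close to $1$). For the measure part I invoke the compact embedding of bounded Radon measures into negative Sobolev spaces, ${\cal M}_b(\Omega')\hookrightarrow\hookrightarrow W^{-1,q}(\Omega')$, which follows by duality from the compact Sobolev (Morrey) embedding $W^{1,q'}_0(\Omega')\hookrightarrow\hookrightarrow C_0(\Omega')$ valid for $q'>d$, since the adjoint of a compact operator is compact and $C_0^*={\cal M}_b$, $(W^{1,q'}_0)^*=W^{-1,q}$. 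Hence $(p_\varepsilon)$ is precompact in $W^{-1,q}(\Omega')$. Moreover, since $q<2$ gives $q'>2$ and thus $W^{1,q'}_0(\Omega')\hookrightarrow W^{1,2}_0(\Omega')$ on the bounded domain, dualizing yields $W^{-1,2}(\Omega')\hookrightarrow W^{-1,q}(\Omega')$, so the already-precompact family $(q_\varepsilon)$ remains precompact in $W^{-1,q}(\Omega')$. Consequently $\Div Q_\varepsilon=p_\varepsilon+q_\varepsilon$ is precompact in $W^{-1,q}(\Omega')$.

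Finally I would run the interpolation argument. For $\theta\in(0,1)$ determined by $\tfrac12=\tfrac{1-\theta}{q}+\tfrac{\theta}{p}$, the negative Sobolev spaces form an interpolation scale, $\bigl[W^{-1,q}(\Omega'),W^{-1,p}(\Omega')\bigr]_\theta=W^{-1,2}(\Omega')$, with the accompanying interpolation inequality
$$\|w\|_{W^{-1,2}}\le C\,\|w\|_{W^{-1,q}}^{1-\theta}\,\|w\|_{W^{-1,p}}^{\theta}.$$
Applying this to the differences $\Div Q_{\varepsilon}-\Div Q_{\varepsilon'}$ and using that $(\Div Q_\varepsilon)$ is Cauchy (along a subsequence) in $W^{-1,q}(\Omega')$ while staying uniformly bounded in $W^{-1,p}(\Omega')$, one sees it is Cauchy, hence precompact, in $W^{-1,2}(\Omega')$. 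A diagonal extraction over an exhaustion $\Omega'_n\subset\subset\Omega$ then upgrades this to precompactness in $W^{-1,2}_{\rm loc}(\Omega)$.

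The main obstacle is the low-exponent step: securing the compact embedding ${\cal M}_b\hookrightarrow\hookrightarrow W^{-1,q}$ with an exponent $q$ that can be chosen \emph{strictly below} $2$ in every relevant dimension, since for $d\ge2$ one must stay below the borderline $d/(d-1)\le2$ and the endpoint is genuinely non-compact. Once a legitimate pair $q<2<p$ is in hand the interpolation step is soft; but it is essential that the reserve $p>2$ be strict, for at $p=2$ the target exponent $2$ would become an endpoint and interpolation would no longer promote the $W^{-1,q}$-convergence to $W^{-1,2}$.
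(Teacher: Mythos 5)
Your argument is correct and is precisely the standard proof of Murat's lemma found in the paper's cited source \cite{Eva} (the paper itself gives no proof, only the citation): boundedness of $\Div Q_\varepsilon$ in $W^{-1,p}$ with $p>2$, compactness of ${\cal M}_{b}\hookrightarrow W^{-1,q}$ for $q<d/(d-1)$ via duality with the Morrey embedding, and promotion to $W^{-1,2}$ by the interpolation inequality $\|w\|_{W^{-1,2}}\le C\|w\|_{W^{-1,q}}^{1-\theta}\|w\|_{W^{-1,p}}^{\theta}$, followed by a diagonal extraction over an exhaustion. All the delicate points (strictness of $p>2$, the choice $q<2$ with $q'>d$, applying the inequality to differences along a $W^{-1,q}$-convergent subsequence) are handled correctly, so there is nothing to add.
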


Now, we can prove a crucial lemma for obtaining the existence of the
$(\alpha,\beta)$-entropy admissible solution to \eqref{oslo3}.

\begin{lemma}
\label{11} Denote for a fixed $\xi\in \R$:
\begin{equation}
\label{entr_adapt}
\begin{split}
q(x,\la)&=H(\la\!-\!\xi)\Big(H(x)(f_\alpha(\lambda)\!-\!f_\alpha(\xi))\!+\!H(-x)(g_\beta(\lambda)\!-\!g_\beta(\xi))\Big),\\
\bar{q}(x,\lambda)&=H(\la\!-\!\xi)\Big(H(x)(f_\alpha^2(\lambda)\!-\!f_\alpha^2(\xi))\!+\!H(-x)(g_\beta^2(\lambda)\!-\!g_\beta^2(\xi))\Big),\\
q_{\alpha,\beta}(x,\lambda)&=H(\la\!-\!\xi)\Big(H(x)(\alpha(\lambda)\!-\!\alpha(\xi))\!+\!H(-x)(\beta(\lambda)\!-\!\beta(\xi))\Big).
\end{split}
\end{equation}
If the initial function $u_0$ from \eqref{oslo3} has bounded
variation then the family
\begin{equation}
\label{mur}
\begin{split}
&\pa_t\bar{q}(x,v_\varepsilon)+\pa_x{q}(x,v_\varepsilon), \ \
\varepsilon>0,
\end{split}
\end{equation} is precompact  in  $W_{\rm loc}^{-1,2}(\R^+\times \R)$.
\end{lemma}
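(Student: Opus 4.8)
The plan is to verify the hypotheses of Murat's lemma (Lemma \ref{mur_lemma}) for the vector field $Q_\varepsilon=(\bar q(x,v_\varepsilon),\,q(x,v_\varepsilon))$, so that $\mathrm{div}\,Q_\varepsilon=\pa_t\bar q+\pa_x q$ is exactly the family \eqref{mur}. By the uniform bound $a\le v_\varepsilon\le b$ (Lemma \ref{Lbbound}) and the continuity of $f_\alpha,g_\beta$ on $[a,b]$, the components of $Q_\varepsilon$ are bounded in $L^\infty$, hence in $L^p_{\rm loc}$ for every $p$, so the $L^p$-hypothesis with $p>2$ comes for free. It then suffices to exhibit a splitting of $\mathrm{div}\,Q_\varepsilon$ into a part precompact in $W^{-1,2}_{\rm loc}$ and a part bounded in ${\cal M}_{b,\rm loc}$.

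The decisive device is to compare $\bar q$ with the natural (one-sided Kruzhkov) entropy $q_{\alpha,\beta}$ of the $v$-equation \eqref{oslo3'}, writing
\[
\pa_t\bar q(x,v_\varepsilon)+\pa_x q(x,v_\varepsilon)
=\underbrace{\pa_t\big(\bar q(x,v_\varepsilon)-q_{\alpha,\beta}(x,v_\varepsilon)\big)}_{(I)}
+\underbrace{\big(\pa_t q_{\alpha,\beta}(x,v_\varepsilon)+\pa_x q(x,v_\varepsilon)\big)}_{(II)}.
\]
For $(I)$ I would observe that for fixed $x$ the map $\lambda\mapsto\bar q(x,\lambda)-q_{\alpha,\beta}(x,\lambda)$ is Lipschitz, with a constant uniform in $x$: the jump of $H(\lambda-\xi)$ is killed by the vanishing of $f_\alpha^2(\lambda)-f_\alpha^2(\xi)$, $\alpha(\lambda)-\alpha(\xi)$, etc. at $\lambda=\xi$, and the admissible $\lambda$-derivative is a bounded combination of $f_\alpha f_\alpha',g_\beta g_\beta',\alpha',\beta'$ on $[a,b]$. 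Since $H(x)$ is $t$-independent, $(I)=D_\lambda(\bar q-q_{\alpha,\beta})(x,v_\varepsilon)\,\pa_t v_\varepsilon$ is dominated pointwise by $C|\pa_t v_\varepsilon|$. Because $u_0\in BV$, the time-regularity estimate (Lemma \ref{regintime}) gives $\int_\R|\pa_t v_\varepsilon|\,dx\le c_1$, so $(I)$ is bounded in $L^1(\R)$ uniformly in $t$, hence in ${\cal M}_{b,\rm loc}(\R^+\times\R)$.

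For $(II)$ I would use that $(q_{\alpha,\beta},q)$ is precisely the entropy--entropy flux pair attached to \eqref{oslo3'}: multiplying the viscous equation \eqref{aug2817} by $H(v_\varepsilon-\xi)$ and using the chain rule (the $\delta(v_\varepsilon-\xi)$ contributions disappear, since the flux increments vanish at $\lambda=\xi$) yields, up to interface contributions, $\pa_t q_{\alpha,\beta}+\pa_x q=H(v_\varepsilon-\xi)\,\varepsilon\pa_{xx}v_\varepsilon$. The right-hand side decomposes as $\varepsilon\pa_x\!\big(H(v_\varepsilon-\xi)\pa_x v_\varepsilon\big)-\varepsilon\,\delta(v_\varepsilon-\xi)(\pa_x v_\varepsilon)^2$. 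The first term converges to $0$ in $W^{-1,2}$ because $\|\varepsilon H(v_\varepsilon-\xi)\pa_x v_\varepsilon\|_{L^2}^2\le\varepsilon^2\int(\pa_x v_\varepsilon)^2=\varepsilon\cdot\big(\varepsilon\int(\pa_x v_\varepsilon)^2\big)\le\varepsilon\,c_2\to0$ by the entropy-dissipation bound (Lemma \ref{entrbound}), so it is a $W^{-1,2}_{\rm loc}$-precompact term; the second term is bounded in ${\cal M}_{b,\rm loc}$ by the same bound $\varepsilon\int(\pa_x v_\varepsilon)^2\le c_2$ (interpreting it through smooth entropy approximations). Thus $(II)$ is again a precompact term plus a bounded measure.

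Collecting, $\mathrm{div}\,Q_\varepsilon=(\text{precompact in }W^{-1,2}_{\rm loc})+(\text{bounded in }{\cal M}_{b,\rm loc})$, and Murat's lemma delivers precompactness of \eqref{mur} in $W^{-1,2}_{\rm loc}(\R^+\times\R)$. I expect the main obstacle to be not any single estimate but the bookkeeping at the interface $x=0$: the regularization \eqref{aug2817} carries the mollified Heaviside $H_\varepsilon$, whereas the entropies \eqref{entr_adapt} carry the exact $H$, so differentiation produces extra terms supported near $x=0$, namely factors of $H_\varepsilon'(\pm x)=\frac1\varepsilon\omega(\pm x/\varepsilon)$ and of the discrepancy $H_\varepsilon-H$. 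One must check that each of these is either bounded in ${\cal M}_{b,\rm loc}$ (which follows from $\int\frac1\varepsilon\omega(x/\varepsilon)\,dx=1$ together with the $L^\infty$-bounds) or convergent in $L^p_{\rm loc}$ as the supports shrink, so that it can be absorbed into the measure or the compact part without spoiling the two-piece decomposition.
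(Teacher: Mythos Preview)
Your proposal is correct and follows essentially the same route as the paper: decompose via the natural entropy pair of the $v$-equation, control the residual time term $\pa_t(\bar q-q_{\alpha,\beta})$ by the BV-in-time bound (Lemma~\ref{regintime}), handle the viscous and interface remainders through Lemmas~\ref{Lbbound}--\ref{entrbound}, and invoke Murat's lemma. The only cosmetic difference is that the paper inserts the $H_\varepsilon$-regularized intermediates $q^\varepsilon_{\alpha,\beta},q^\varepsilon$ (which match the regularized equation \eqref{aug2817} exactly) and then disposes of $q^\varepsilon-q\to0$ in $L^2$ by dominated convergence, thereby absorbing the interface bookkeeping you flagged into a single step rather than tracking it separately.
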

{\bf Proof:}

Denote  $\eta'(\la)=H(\la-\xi)$. Define the entropy flux which
corresponds to \eqref{aug2817}:
\begin{equation*}
\begin{split}
q^{\varepsilon}(x,\la)\!&=\!H(\la\!-\!\xi)
\Big(H_{\varepsilon}(x)(f_\alpha(\lambda)\!-\!f_\alpha(\xi))\!+\!H_{\varepsilon}(-x)(g_\beta(\lambda)\!-\!g_\beta(\xi))\Big),\\
q_{\alpha,\beta}^{\varepsilon}(x,\la)\!&=\!H(\la\!-\!\xi)
\Big(H_{\varepsilon}(x)(\alpha(\lambda)\!-\!\alpha(\xi))\!+\!H_{\varepsilon}(-x)(\beta(\lambda)\!-\!\beta(\xi))\Big).
\end{split}
\end{equation*}

Denote $\delta_{\varepsilon}(x)=H'_{\varepsilon}(x)$, $i=1,2$. After
multiplying \eqref{aug2817} by $\eta'(v_\varepsilon)$, we obtain in
the sense of distributions:
\begin{align}
\label{mur_1} &\pa_t
q_{\alpha,\beta}^\varepsilon(x,v_\varepsilon)+\pa_x
q^{\varepsilon}(x,v_\varepsilon)\\&=
\left(\delta_{\varepsilon}(x)f_\alpha(\xi)-\delta_{\varepsilon}(x)g_\beta(\xi)\right)
+\varepsilon( \pa_x(v_{\varepsilon x} \eta'(v_\varepsilon))-(v_{\varepsilon x})^2\eta''(v_\varepsilon)\nonumber\\
&\leq\delta_{\varepsilon}(x) \left(f_\alpha(\xi)-g_\beta(\xi)\right)
+\varepsilon( \pa_x(v_{\varepsilon x} \eta'(v_\varepsilon)).
\nonumber
\end{align}

From here, according to the Schwartz lemma for non-negative
distributions, we conclude that there exists a positive Radon
measure $\mu^\varepsilon_\xi(t,x)$ such that:
\begin{align}
\label{mur_2} &\pa_t q_{\alpha,\beta}^{\varepsilon}(x,v_\varepsilon)
+\pa_xq^{\varepsilon}(x,v_\varepsilon)\\&=
\delta_{\varepsilon}\left(f_\alpha(\xi)-g_\alpha(\xi)\right)
+\varepsilon( \pa_x(v_{\varepsilon
x}\eta'(v_\varepsilon))-\mu^\varepsilon_\xi(t,x). \nonumber
\end{align}

Rewrite expression \eqref{mur_2} in the form:
\begin{align}
\label{mur_3}
&\pa_t\bar{q}(x,v_\varepsilon)+\pa_x q(x,v_\varepsilon)\\
&=\pa_t\left(\bar{q}(x,v_\varepsilon)-q^\varepsilon_{\alpha,\beta}(x,v_\varepsilon)\right)+\pa_x\left(
q^{\varepsilon}(x,v_\varepsilon)-q(x,v_\varepsilon)
\right)\nonumber\\&+
\delta_{\varepsilon}\left(f_\alpha(\xi)-g_\beta(\xi)\right)
+\varepsilon( \pa_x(v_{\varepsilon
x}\eta'(v_\varepsilon))-\mu^\varepsilon_\xi(t,x). \nonumber
\end{align} Since, clearly, $q^{\varepsilon}(x,v_\varepsilon)-q(x,v_\varepsilon)\to 0$
 as $\varepsilon\to 0$ pointwisely, we derive the statement of the lemma from the Lebesgue
dominated convergence theorem, Lemmas \ref{Lbbound}-\ref{entrbound},
and Lemma \ref{mur_lemma}. For details please consult \cite[Theorem
2.6.]{MA}  $\Box$

From Lemma \ref{11} and Theorem \ref{tcrucial}, it is easy to prove
that for any choice of the functions $\alpha$ and $\beta$ from
Definition \ref{def-adm} there exists an $(\alpha,\beta)$-entropy
admissible solution to \eqref{oslo3} provided $u_0\in BV(\R)$:

\begin{theorem}
\label{19} Assume that $u_0\in BV(\R)$, and that $f$ and $g$ satisfy
\eqref{bergen_11}, where $u_0$, $f$ and $g$ are given in
\eqref{oslo3}. For any bijections $\alpha,\beta:[a,b]\to [a,b]$ from
Definition \ref{def-adm} there exists an $(\alpha,\beta)$-entropy
admissible weak solution to \eqref{oslo3}.
\end{theorem}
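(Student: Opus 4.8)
The plan is to obtain the wanted solution as a strong $L^1_{\rm loc}$-limit of the viscous approximations $v_\varepsilon$ solving \eqref{aug2817}, and then to recover $u$ from $v$ through the change of variables of Definition \ref{def-adm}, i.e. $u=\alpha(v)H(x)+\beta(v)H(-x)$. By Lemmas \ref{Lbbound}--\ref{entrbound} the family $(v_\varepsilon)$ enjoys, uniformly in $\varepsilon$, the $L^\infty$-bound $a\le v_\varepsilon\le b$, the $L^1$-bound on $\pa_t v_\varepsilon$ (this is where $u_0\in BV(\R)$ enters), and the entropy-dissipation bound $\varepsilon\int(\pa_x v_\varepsilon)^2\le c_2$. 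By Lemma \ref{11} the family $\pa_t\bar q(x,v_\varepsilon)+\pa_x q(x,v_\varepsilon)$ is then precompact in $W^{-1,2}_{\rm loc}(\R^+\times\R)$, which is precisely the hypothesis of Theorem \ref{tcrucial} with $f,g$ there replaced by $f_\alpha,g_\beta$ and with the ``time'' function $h(x,\lambda)=H(x)f_\alpha^2(\lambda)+H(-x)g_\beta^2(\lambda)$. Thus, once genuine nonlinearity of the corresponding vector is checked, Theorem \ref{tcrucial} produces a subsequence with $v_\varepsilon\to v$ in $L^1_{\rm loc}(\R^+\times\R)$.

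The \emph{decisive step} is to verify that the vector
\[
\bigl(H(x)f_\alpha^2(\lambda)+H(-x)g_\beta^2(\lambda),\; H(x)f_\alpha(\lambda)+H(-x)g_\beta(\lambda)\bigr)
\]
is genuinely nonlinear in the sense of Definition \ref{non-deg}. For $x>0$ and $(\xi_0,\xi_1)\in S^1$ the relevant map is $\lambda\mapsto \xi_0 f_\alpha^2(\lambda)+\xi_1 f_\alpha(\lambda)=p(f_\alpha(\lambda))$, where $p(s)=\xi_0 s^2+\xi_1 s$ is a nonconstant polynomial of degree at most two. Since $f$ is nonconstant on every subinterval of $(a,b)$ by \eqref{bergen_11} and $\alpha=\tilde\alpha^{-1}$ is a homeomorphism, $f_\alpha=f\circ\alpha$ is continuous and nonconstant on every nondegenerate interval, so its image there is a nondegenerate interval $J$; as $p$ is a nonconstant polynomial it is nonconstant on $J$, whence $p\circ f_\alpha$ is nonconstant on the interval in question. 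The case $x<0$ is identical with $g_\beta$ replacing $f_\alpha$. This is exactly the purpose of squaring the flux in $\bar q$: it upgrades the mere nonconstancy hypothesis \eqref{bergen_11} to genuine nonlinearity, thereby bypassing any need for genuine nonlinearity of $f$ and $g$ themselves.

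With genuine nonlinearity established, Theorem \ref{tcrucial} yields $v_\varepsilon\to v$ in $L^1_{\rm loc}$ along a subsequence, with $a\le v\le b$ a.e. by Lemma \ref{Lbbound}. It then remains to pass to the limit in the entropy inequality. Discarding the nonnegative measure $\mu^\varepsilon_\xi$ and the vanishing viscous terms in \eqref{mur_1} and using $\delta_\varepsilon\to\delta$, the limit $v$ satisfies for each $\xi$ the semi-Kruzhkov inequality $\pa_t q_{\alpha,\beta}(x,v)+\pa_x q(x,v)\le \delta(x)(f_\alpha(\xi)-g_\beta(\xi))$; running the same computation with the complementary entropy $\eta'(\lambda)=-H(\xi-\lambda)$ and adding the two inequalities (the standard combination of \cite{kar3}) converts the two semi-Kruzhkov entropies into the full ${\rm sgn}(v-\xi)$ entropy with singular term $-|f_\alpha(\xi)-g_\beta(\xi)|\delta(x)$, which is precisely condition (D.2) of Definition \ref{def-adm}. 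Condition (D.1) follows from $a\le v\le b$ together with $\alpha,\beta:[a,b]\to[a,b]$, so that $u=\alpha(v)H(x)+\beta(v)H(-x)$ is an $(\alpha,\beta)$-entropy admissible weak solution to \eqref{oslo3}.

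The main obstacle is the interplay between Lemma \ref{11} and the Panov-type compactness Theorem \ref{tcrucial}: the $W^{-1,2}_{\rm loc}$-precompactness is useful only because the squared fluxes in $\bar q$ force genuine nonlinearity under the weak assumption \eqref{bergen_11}, and it is this combination---rather than any regularity of $f,g$ directly---that upgrades weak convergence to strong $L^1_{\rm loc}$ convergence and hence secures admissibility of the limit. By contrast, the passage to the limit in the entropy inequality and the verification of (D.1)--(D.2) are routine.
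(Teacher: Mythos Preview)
Your proof is correct and follows the same route as the paper's own argument: verify that the vector $(H(x)f_\alpha^2+H(-x)g_\beta^2,\,H(x)f_\alpha+H(-x)g_\beta)$ is genuinely nonlinear under \eqref{bergen_11}, invoke Lemma~\ref{11} together with Theorem~\ref{tcrucial} to extract a strongly $L^1_{\rm loc}$-convergent subsequence of $(v_\varepsilon)$, and pass to the limit in the viscous entropy inequality to obtain (D.2). Your treatment is in fact more explicit than the paper's---you spell out why the polynomial $p(s)=\xi_0 s^2+\xi_1 s$ composed with the nonconstant continuous map $f_\alpha$ cannot be constant on any subinterval, and you indicate the standard combination of the two semi-Kruzhkov inequalities that yields the full ${\rm sgn}$-entropy with the $-|f_\alpha(\xi)-g_\beta(\xi)|\delta(x)$ term---whereas the paper dispatches both points with ``obviously'' and ``clearly''.
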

{\bf Proof:} First, notice that the vector
$(\bar{q}(x,\lambda),{q}(x,\lambda))$ from \eqref{entr_adapt} is
genuinely nonlinear. Indeed, for $x>0$ the vector reduces to
$(f_\alpha^2(\lambda), f_\alpha(\lambda))$ and this is obviously
genuinely nonlinear vector according to \eqref{bergen_11}.
Similarly, we conclude about the genuine nonlinearity for $x<0$.

Now, from Theorem \ref{tcrucial} and Lemma \ref{11}, we conclude
that the family $(v_\varepsilon)$ of solutions to \eqref{aug2817} is
strongly precompact in $L^1_{loc}(\R^+\times \R)$. Denote by $v$ the
$L^1_{loc}(\R^+\times \R)$ limit along a subsequence of the family
$(v_\varepsilon)$. Clearly, $u=\alpha(v)H(x)+\beta(v)H(-x)$ will
represent the $(\alpha,\beta)$-entropy admissible solution to
\eqref{oslo3}. $\Box$

Now, we can prove the main theorem of the section.

{\bf Proof of Theorem \ref{th-main}:} We need to find the functions
$\alpha$ and $\beta$ so that the functions $f_\alpha$ and $g_\beta$
satisfy the crossing conditions. As explained in Remark
\eqref{refrem}, we choose the points $A, B\in (a,b)$ satisfying
\eqref{conn1}, and construct the functions $\alpha$ and $\beta$ so
that for appropriate $c\in (a,b)$ it holds $\alpha(c)=B$,
$\beta(c)=A$, and $f_\alpha \geq g_\beta$ on $[c,b]$, and $f_\alpha
\leq g_\beta$ on $[a,c]$ which is nothing else but the crossing
condition for $f_\alpha$ and $g_\beta$.

Next, assume that $u_0\in BV(\R)$ and denote by $u$ the
$(\alpha,\beta)$-entropy admissible solution to \eqref{oslo3} (it is
given by Theorem \ref{19}). Notice that from the construction (it is
enough to let $\varepsilon\to 0$ in \eqref{mur_3}) and Lemmas
\ref{Lbbound}-\ref{entrbound}, it follows that the function
$v=\tilde{\alpha}(u)H(x)+\tilde{\beta}(u)H(-x)$ is, at the same
time, a quasi-solution to the equation:
\begin{equation}
\label{bgr_10}
\begin{split}
&\pa_t\Big(H(x)f_\alpha^2(v)\!+\!H(-x)g_\beta^2(v)\Big)\!+\!\pa_x\Big(H(x)f_\alpha(v)\!+\!H(-x)g_\beta(v)\Big)=0,\\
%&v|_{t=0}=\tilde{\alpha}(u_0)H(x)+\tilde{\beta}(u_0)H(-x).
\end{split}
\end{equation} Since the vector
$(H(x)f_\alpha^2(\lambda)+H(-x)g_\beta^2(\lambda),H(x)f_\alpha(\lambda)+H(-x)g_\beta(\lambda))$
is genuinely nonlinear (see \eqref{bergen_11}), according to Theorem
\ref{existenceoftraces}, the function $v$ admits strong traces at
the interface $x=0$.

Similarly, from the construction again and according to the choice
of the function $\alpha$ and $\beta$, we see that $v$ is an entropy
admissible solution in the sense of Definition \ref{kar1} to the
Cauchy problem
\begin{equation}
\label{aux_mar}
\begin{split}
&\pa_t \left( {\alpha}(v)H(x) + {\beta}(v)H(-x)\right)+ \pa_x\left(H(x){f}_\alpha(v)+ H(-x){g}_\alpha(v) \right)=0,\\
&v|_{t=0}=\alpha(u_0)H(x)+\beta(u_0)H(-x),
\end{split}
\end{equation} where $f_\alpha$ and $g_\beta$ satisfy the
crossing condition.

According to Theorem \ref{kenn}, we conclude that $v$ is a unique
entropy admissible solution to \eqref{aux_mar} in the sense of
Definition \ref{kar1} implying that
$u={\alpha}(v)H(x)+{\beta}(v)H(-x)$ is a unique
$(\alpha,\beta)$-entropy admissible solution to \eqref{oslo3}.

Now, assume that $u_0\notin BV(\R)$. Approximate the function
$u_{0}$ by a sequence $(u_{0\delta})\in BV(\R)$ so that
$$
u_0-u_{0\delta}\to 0 \ \ {\rm as} \ \ \delta\to 0
$$ strongly in $L^1_{loc}(\R)$. Then, we find a unique
$(\alpha,\beta)$-entropy admissible solution $u_\delta$ to
\eqref{oslo3} where $u|_{t=0}=u_{0\delta}$ (given $\alpha$ and
$\beta$ for which we have uniqueness i.e. such that $f_\alpha$ and
$g_\beta$ satisfy the crossing conditions). According to Theorem
\ref{th-main}, the family $(u_\delta)$ satisfy the following
stability relation:
$$
\int_0^T\int_{-R}^R|u_{\delta_1}-u_{\delta_2}|dxdt\leq
CT\int_{-\bar{R}}^{\bar{R}}|u_{0\delta_1}-u_{0\delta_2}|dx,
$$ where $R$ and $T$ are arbitrary positive constants, and
$C, \bar{R}$ are constants depending on $R$, the functions $f$, $g$,
$\alpha$ and $\beta$. Since the right-hand side of the latter
expression is uniformly small with respect to $\delta_1$ and
$\delta_2$, from the Cauchy criterion we conclude that there exists
$u\in L^1_{loc}$ such that $u_\delta\to u$ strongly in
$L^1_{loc}(\R^d)$. Clearly, the function $u$ will represent an
$(\alpha, \beta)$-entropy admissible solution to \eqref{oslo3}.

Since, according to \eqref{bergen_11} and Theorem
\ref{existenceoftraces}, the function $u$ admits strong traces at
$x=0$, we conclude that it must be a unique $(\alpha, \beta)$-
entropy admissible solution to \eqref{oslo3}. $\Box$

\section{General case}

At the beginning, notice that there are many examples of fluxes from
\eqref{oslo3} when we can not apply the procedure from the previous
section (see Figure 3). Therefore, in this section, we shall
demonstrate how to apply the $(\alpha,\beta)$-entropy admissibility
concept on \eqref{oslo3} in a general case. More precisely, we shall
only assume that $f,g\in C^1({\bf R})$ are such that
$f(a)=f(b)=g(a)=g(b)=0$, and, for simplicity, that there exists a
finite number of intervals $(a_{r_j},a_{r_{j}+1})$, $j=1,\dots,k_r$,
and $(b_{l_i},b_{l_{i}}+1)$, $i=1,\dots,k_l$, $k_l, k_r\in \N$, such
that the mappings

\begin{equation}
\label{cond_brg}
\begin{split}
&\text{ $\lambda\mapsto g(\lambda)$ and $\lambda
\mapsto f(\lambda)$ are constant on the intervals}\\
&\text{ $(b_{l_i},b_{l_{i}+1})$, $i=1,\dots,k_l$, and
$(a_{r_j},a_{r_{j}+1})$, $j=1,\dots,k_r$, respectively.}
\end{split}
\end{equation} For a convenience, assume that $[a,b]=\ccup\limits_{i=1}^{n_r}
[a_i,a_{i+1})$ and $[a,b]=\ccup\limits_{i=1}^{n_l} [b_i,b_{i+1})$,
where $n_l,n_r\in \N$, and $a_1=b_1=a$, and $a_{n_r}=b_{n_l}=b$.

\begin{figure}[htp]
\begin{center}
  \includegraphics[width=2in]{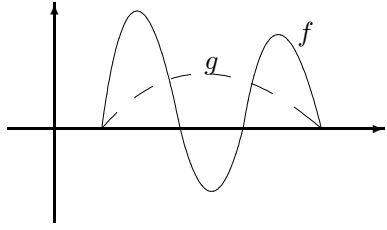}\\
  \caption{Functions $f$ (normal line) and $g$
(dashed line) do not satisfy the crossing condition and there exist
no increasing bijections $\alpha, \beta:[a,b]\to [a,b]$ such that
$f\circ\alpha$ and $g\circ\beta$ satisfy the crossing conditions.}
  \label{referee}
  \end{center}
\end{figure}

%\placedrawing{cross_3.lp}{Functions $f$ (normal line) and $g$
%(dashed line) do not satisfy the crossing condition and there exist
%no increasing bijections $\alpha, \beta:[a,b]\to [a,b]$ such that
%$f\circ\alpha$ and $g\circ\beta$ satisfy the crossing
%conditions.}{f3}

We shall need the notion of Young measures (we will be highly
selective and, for an application of Young measures in conservation
laws, address a reader on famous paper \cite{Dpe}).

\begin{theorem}\cite{Pablo}
\label{pablo} Assume that the sequence $(u_{\varepsilon_k})$ is
uniformly bounded in \\ $L^{p}_{loc}(\R^+\times \R^{d}))$, $p\geq
1$. Then, there exists a subsequence (not relabeled)
$(u_{\varepsilon_k})$ and a family of probability measures
$$
\nu_{t,x}\in {\cal M}(\R), \ \ (t,x)\in\R^+\times\R^d
$$
such that the limit
$$
\bar{g}(t,x):=\lim_{k\to \infty}g(u_{\varepsilon_k}(t,x))
$$
exists in the distributional sense for all $g\in C(\R)$. The limit
is represented by the expectation value
$$
\bar{g}(t,x)=\int_{\R^+\times\R^d}g(\lambda)d\nu_{t,x}(\lambda),
$$
for almost all points $(t,x)\in\R^+\times\R^d$.

We refer to such a family of measures $\nu=(\nu_{(t,x)})_{(t,x)\in
\R^+\times \R}$ as the Young measure associated to the sequence
$(u_{\varepsilon_k})_{k\in {\bf N}}$.

Furthermore,
$$
u_{\varepsilon_k}\to u \ \ {\rm in} \ \ L^r_{\rm
loc}(\R^{+}\times\R^{d}), \; 1\leq r<p
$$
if and only if
$$
\nu_{t,x}(\lambda)=\delta(\lambda-{u(t,x)}) \ \ a.e. \ \ (t,x)\in
\R^+\times\R,
$$ where $\delta$ is the Dirac distribution.
\end{theorem}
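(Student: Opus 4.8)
The plan is to prove this via the standard functional-analytic construction of Young measures, in the spirit of Tartar and Ball (see also \cite{Dpe}), exploiting weak-$*$ sequential compactness in a suitable space of measure-valued maps. Throughout write $\Omega=\R^+\times\R^d$ and, since every assertion is local, fix a bounded open set $K$ with $\overline{K}\subset\Omega$; the global statement then follows by a diagonal/exhaustion argument over an increasing sequence of such $K$. To each member of the sequence I associate the elementary (Dirac) measure-valued map $x\mapsto\delta_{u_{\varepsilon_k}(x)}$, viewed as an element $V_k$ of $L^\infty_{w*}(K;\mathcal{M}(\R))$, the space of weak-$*$ measurable, essentially bounded maps from $K$ into the bounded Radon measures. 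Each $V_k(x)$ has total mass $1$, so the family $(V_k)$ lies in the closed unit ball of this space.

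The key step is a compactness argument. By the standard duality for Bochner-type spaces, $L^\infty_{w*}(K;\mathcal{M}(\R))$ is the dual of the separable Banach space $L^1(K;C_0(\R))$. Hence its closed unit ball is weak-$*$ sequentially compact (Banach--Alaoglu together with the metrizability of the ball coming from separability). I would therefore extract a subsequence, not relabeled, and a limit $\nu=(\nu_x)_{x\in K}$ with $V_k\rightharpoonup\nu$ weak-$*$, i.e. for every $\varphi\in L^1(K;C_0(\R))$
\[
\int_K \langle V_k(x),\varphi(x,\cdot)\rangle\,dx\;\longrightarrow\;\int_K \langle \nu_x,\varphi(x,\cdot)\rangle\,dx .
\]
Specializing to $\varphi(x,\lambda)=\psi(x)g(\lambda)$ with $\psi\in L^1(K)$ and $g\in C_0(\R)$ yields exactly $g(u_{\varepsilon_k})\rightharpoonup\int_\R g\,d\nu_x$ in the distributional sense. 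One then checks that $\nu_x\geq0$ and, using the uniform $L^p$ bound to rule out concentration or escape of mass at infinity, that $\nu_x$ is a probability measure for a.e. $x$ and that the representation extends from $g\in C_0(\R)$ to all $g\in C(\R)$. In the situation relevant to us the $u_{\varepsilon_k}$ take values in the fixed compact interval $[a,b]$, so each $\nu_x$ is supported in $[a,b]$ and this extension is immediate.

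For the final equivalence, suppose first that $u_{\varepsilon_k}\to u$ in $L^r_{loc}$; passing to an a.e.-convergent subsequence gives $\delta_{u_{\varepsilon_k}(x)}\to\delta_{u(x)}$ narrowly for a.e. $x$, whence $\nu_x=\delta(\lambda-u(x))$. Conversely, if $\nu_x=\delta(\lambda-u(x))$, I apply the representation with $g(\lambda)=\lambda$ to obtain $u_{\varepsilon_k}\rightharpoonup u$, and with $g(\lambda)=|\lambda|^r$ (admissible after truncation, using the $L^p$ bound with $r<p$) to obtain $\|u_{\varepsilon_k}\|_{L^r(K)}\to\|u\|_{L^r(K)}$; weak convergence together with convergence of norms in the uniformly convex space $L^r(K)$, $1<r<p$, forces strong convergence, the borderline case $r=1$ following from equi-integrability.

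I expect the main obstacle to be the compactness step, specifically justifying the duality $L^\infty_{w*}(K;\mathcal{M}(\R))=\big(L^1(K;C_0(\R))\big)^*$ together with the separability that makes the unit ball metrizable in the weak-$*$ topology, and, in the general (merely $L^p$) formulation, the accompanying verification that no mass escapes to infinity, so that each $\nu_x$ is a genuine probability measure and the representation holds for all $g\in C(\R)$ rather than only for $g\in C_0(\R)$.
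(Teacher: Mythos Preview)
The paper does not give a proof of this statement at all: it is quoted verbatim as a known result from \cite{Pablo} and used as a black box, so there is no ``paper's own proof'' to compare against. Your sketch is the standard Tartar--Ball construction of Young measures (embedding the sequence as Dirac maps in $L^\infty_{w*}(K;\mathcal M(\R))=(L^1(K;C_0(\R)))^*$, extracting a weak-$*$ limit via Banach--Alaoglu, then upgrading from $C_0$ to $C$ using the $L^p$ bound or, in the bounded-range case, compact support of $\nu_x$), and it is correct in outline; this is precisely the route taken in the cited reference and in \cite{Dpe}. The one place to be slightly more careful is the converse implication: your Radon--Riesz argument (weak convergence plus convergence of norms in uniformly convex $L^r$) works for $1<r<p$, but you should note that the weak convergence of $u_{\varepsilon_k}$ in $L^r(K)$ (not merely in $\mathcal D'$) follows from the $L^p$-bound, and that the case $r=1$ indeed needs the separate equi-integrability argument you mention.
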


Introduce the truncation operator $s_{l,k}(u)=\max\{l,\min\{k,u
\}\}$, $l<k$, $l,k\in \R$. The following important lemma holds.

\begin{lemma}
\label{l20} Denote by $(v_\varepsilon)$ family of solutions to
\eqref{aug2817} where $u_0\in BV(\R;[a,b])$ and $f$ and $g$ satisfy
\eqref{cond_brg}. Assume that the mapping $\lambda\mapsto
f(\lambda)$ is not constant on any subinterval of an interval
$(l,k)$. Then, the sequence $(H(x)s_{l,k}(v_\varepsilon))$ is
strongly precompact in $L^1_{loc}(\R^+\times\R)$.

Similarly, if the mapping $\lambda\mapsto g(\lambda)$ is not
constant on any subinterval of an interval $(l,k)$. Then, the
sequence $(H(-x)s_{l,k}(v_\varepsilon))$ is strongly precompact in
$L^1_{loc}(\R^+\times\R)$.
\end{lemma}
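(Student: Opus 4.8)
The plan is to prove the claimed strong precompactness of $(H(x)s_{l,k}(v_\varepsilon))$ by showing that the truncated family is genuinely nonlinear on the relevant region and then invoking the compensated-compactness machinery already assembled in the excerpt. The key observation is that for $x>0$ the equation \eqref{aug2817} reduces (as $\varepsilon\to 0$) to a conservation law with flux $f_\alpha$, and by hypothesis $\lambda\mapsto f(\lambda)$ is nonconstant on any subinterval of $(l,k)$. Since $\alpha$ is a strictly increasing bijection, $f_\alpha=f\circ\alpha$ is likewise nonconstant on any subinterval of $(\alpha^{-1}(l),\alpha^{-1}(k))$, so the truncated flux $\lambda\mapsto f_\alpha(s_{l,k}(\lambda))$ is genuinely nonlinear on the interval where the truncation is active. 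This is precisely the structure needed to apply Theorem \ref{tcrucial}.

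\textbf{First I would} verify the $W^{-1,2}_{\rm loc}$-precompactness of the truncated entropy-entropy flux pair. Following the derivation in Lemma \ref{11}, I would multiply \eqref{aug2817} by $\eta'(v_\varepsilon)=H(v_\varepsilon-c)$ with the entropy adapted to the truncation operator $s_{l,k}$, obtaining a distributional identity of the form $\pa_t(\,\cdot\,)+\pa_x(\,\cdot\,)=\delta_\varepsilon(\cdot)+\varepsilon\pa_x(v_{\varepsilon x}\eta')-\mu^\varepsilon$. The singular interface term $\delta_\varepsilon(f_\alpha(\xi)-g_\beta(\xi))$ and the Radon measure $\mu^\varepsilon$ are bounded in $\mathcal{M}_{b,loc}$ thanks to Lemma \ref{regintime} (the time-regularity bound $\|\pa_t v_\varepsilon\|_{L^1}\leq c_1$ obtained for $u_0\in BV$), while the viscous term $\varepsilon\pa_x(v_{\varepsilon x}\eta')$ is controlled by Lemma \ref{entrbound}. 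By Murat's Lemma \ref{mur_lemma}, the decomposition into a bounded-measure part and a $W^{-1,2}_{\rm loc}$ part yields precompactness of the full divergence in $W^{-1,2}_{\rm loc}$.

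\textbf{Next}, I would apply Theorem \ref{tcrucial} with $h(x,\lambda)=H(x)s_{l,k}(\lambda)$ (suitably combined with a quadratic companion as in \eqref{entr_adapt}) and the flux $H(x)f_\alpha(\lambda)+H(-x)g_\beta(\lambda)$: genuine nonlinearity for $x>0$ on the truncation interval together with the $W^{-1,2}_{\rm loc}$-precompactness established above gives a subsequence of $(H(x)s_{l,k}(v_\varepsilon))$ converging strongly in $L^1_{loc}$. The symmetric statement for $(H(-x)s_{l,k}(v_\varepsilon))$ follows verbatim, exchanging the roles of $f_\alpha$ and $g_\beta$ and using the hypothesis that $g$ is nonconstant on subintervals of $(l,k)$.

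\textbf{The hard part} will be handling the interface $x=0$ cleanly: the Heaviside factor $H(x)$ in the truncated quantity interacts with the singular source $\delta_\varepsilon$ supported near $x=0$, and one must ensure that the truncation $s_{l,k}$ does not destroy the structure needed for Theorem \ref{tcrucial}. The subtlety is that genuine nonlinearity only holds on the active truncation band $(l,k)$, so outside this band the flux is locally constant and the corresponding entropy production degenerates; the truncation operator $s_{l,k}$ is designed precisely to confine attention to the nondegenerate region, but verifying that the measure terms remain uniformly bounded across the flattened zones—where \eqref{cond_brg} forces $f$ or $g$ to be constant—requires care. I expect this localization argument, rather than the compactness extraction itself, to be the technical crux.
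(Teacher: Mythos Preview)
Your overall strategy is sound and aligns with the paper's, but you diverge in Step~1 and you misidentify the ``hard part''. The paper does \emph{not} redo the entropy derivation for the truncated quantities from scratch. Instead, it starts from Lemma~\ref{11} (precompactness of $\pa_t\bar q(x,v_\varepsilon)+\pa_x q(x,v_\varepsilon)$ for the \emph{untruncated} $v_\varepsilon$) and invokes a result of Panov \cite[Theorem~6]{Pan1}: if the entropy divergence for $v_\varepsilon$ is precompact in $W^{-1,2}_{\rm loc}$, then so is the corresponding divergence for the truncation $s_{l,k}(v_\varepsilon)$. This immediately gives precompactness of $\pa_t\bar q(x,s_{l,k}(v_\varepsilon))+\pa_x q(x,s_{l,k}(v_\varepsilon))$ without touching the vanishing-viscosity approximation again. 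Your plan to re-multiply \eqref{aug2817} by ``an entropy adapted to $s_{l,k}$'' would also work in principle, but you would have to specify that entropy precisely and redo the Murat decomposition; the paper's route via truncation stability is shorter and avoids this.

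Concerning the interface: what you flag as the technical crux is in fact a one-line observation. Since $H(x)s_{l,k}(v_\varepsilon)$ is identically zero for $x<0$, one has the identity
\[
q\bigl(x,H(x)s_{l,k}(v_\varepsilon)\bigr)=H(x)\,q\bigl(x,s_{l,k}(v_\varepsilon)\bigr)-H(-\xi)H(-x)\bigl(g_\beta(0)-g_\beta(\xi)\bigr),
\]
and analogously for $\bar q$. The $H(-x)$ piece is an $\varepsilon$-independent constant, so its space--time derivatives vanish, and the entire interface contribution disappears. Genuine nonlinearity is then checked only for $x>0$, where the vector reduces to $(f_\alpha^2,f_\alpha)$---exactly as you describe---and Theorem~\ref{tcrucial} applies on $\R^+\times(0,\infty)$; the region $x<0$ is trivially precompact because the sequence is identically zero there. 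So the ``localization across flattened zones'' you anticipate is not needed at all: the Heaviside factor does the localization for you.
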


\begin{proof}

Notice that from Lemma \ref{11}, it follows that for the family of
functions $v_\varepsilon$ and any $k,l\in \R$, the families
\begin{equation}
\label{budva_1}
\begin{split}
&\pa_t\bar{q}(x,H(x)s_{l,k}(v_\varepsilon))+\pa_x
q(x,H(x)s_{l,k}(v_\varepsilon)) \ \ {\rm and} \\
&\pa_t\bar{q}(x,H(-x)s_{l,k}(v_\varepsilon))+\pa_x
q(x,H(-x)s_{l,k}(v_\varepsilon)),
\end{split}
\end{equation}where the functions
$\bar{q}, q$ given by \ref{entr_adapt}, are strongly precompact in
$W^{-1,2}_{loc}(\R^+\times\R)$. Indeed, notice that
\begin{equation}
\label{budva_2}
\begin{split}
q(x,H(x)s_{l,k}(v_\varepsilon))&=H(x)q(x,s_{l,k}(v_\varepsilon))-H(-\xi)H(-x)(g_\beta(0)-g_\beta(\xi))\\
\bar{q}(x,H(x)s_{l,k}(v_\varepsilon))&=H(x)\bar{q}(x,s_{l,k}(v_\varepsilon))-H(-\xi)H(-x)(g^2_\beta(0)-g^2_\beta(\xi))
\end{split}.
\end{equation} Since $\pa_t\bar{q}(x,s_{l,k}(v_\varepsilon))+\pa_x q(x,s_{l,k}(v_\varepsilon))$ is strongly precompact
in $W^{-1,2}_{loc}(\R^+\times\R)$ if
$\pa_t\bar{q}(x,v_\varepsilon)+\pa_x q(x,v_\varepsilon)$ is (see
\cite[Theorem 6]{Pan1}), we conclude from \eqref{budva_2} that
\eqref{budva_1} holds.

Furthermore, notice that if the mapping $ \lambda\mapsto f(\lambda)$
 is not constant on any subinterval
of an interval $(k,l)$ then the vector
$(\bar{q}(x,\lambda),{q}(x,\lambda))$ from \eqref{entr_adapt} is
genuinely nonlinear on the interval $(l,k)$ and $x>0$. Indeed, for
$x>0$ the vector reduces to $(f_\alpha^2(\lambda),
f_\alpha(\lambda))$ and this is obviously genuinely nonlinear vector
since, due to the assumptions of the lemma, for any $\xi_0,\xi_1\in
\R$, it holds $\xi_0f^2(\lambda)\neq \xi_1 f(\lambda)$ for a.e.
$\lambda\in (k,l)$. Now, from Theorem \ref{tcrucial} and Lemma
\ref{11}, we conclude that the family $(H(x)s_{k,l}(v_\varepsilon))$
is strongly precompact in $L^1_{loc}(\R^+\times \R)$.

In the completely same way, we conclude that the family
$(H(-x)s_{k,l}(v_\varepsilon))$ is strongly precompact in
$L^1_{loc}(\R^+\times \R)$ if the mapping $\lambda\mapsto
g(\lambda)$ is different from a constant on every subinterval of the
interval $(k,l)$. \end{proof}

Next lemma deals with precompactness properties of the family
$(f(v_\varepsilon)H(x)+g(v_\varepsilon)H(-x))$.

\begin{lemma}
\label{l21} Assume that the flux functions  $f$ and $g$ from
\eqref{oslo3} satisfy \eqref{cond_brg}. Denote by $(v_\varepsilon)$
family of solutions to \eqref{aug2817} with $u_0\in BV(\R;[a,b])$.
Then, there exists a function $v\in L^\infty(\R)$ such that
\begin{equation}
\label{brg_5} f(v_\varepsilon)H(x)+g(v_\varepsilon)H(-x) \to
f(v)H(x)+g(v)H(-x)
\end{equation} strongly in $L^1_{loc}(\R^+\times \R)$. Moreover, the function $v$ admits left and right traces at the
interface $x=0$.
\end{lemma}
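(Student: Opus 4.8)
The plan is to prove strong $L^1_{loc}$ convergence of the flux separately on each side of the interface, using the truncation lemma (Lemma \ref{l20}) to handle the regions where $f$ or $g$ is genuinely nonlinear, and then to argue that on the ``flat'' intervals where the flux is constant the lack of compactness of $v_\varepsilon$ itself does not matter, precisely because the flux is constant there. Let me sketch this for $x>0$; the case $x<0$ is symmetric with $g$ in place of $f$.

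First I would decompose $[a,b]=\ccup_{j}[a_{r_j},a_{r_j+1})$ according to the partition in \eqref{cond_brg}, separating the intervals on which $\lambda\mapsto f(\lambda)$ is constant from those on which it is nonconstant. On each \emph{nonconstant} interval $(l,k)$, Lemma \ref{l20} gives that $H(x)s_{l,k}(v_\varepsilon)$ is strongly precompact in $L^1_{loc}$; extracting a diagonal subsequence over the finitely many such intervals (there are only $k_r+1$ of them) yields, via Theorem \ref{pablo}, a Young measure $\nu_{t,x}$ associated to $(v_\varepsilon)$ whose restriction to each nonconstant interval is a Dirac mass. On each \emph{constant} interval $(a_{r_j},a_{r_j+1})$, $f$ is identically equal to some value $f_j$, so $f$ pushes the (possibly nondegenerate) part of $\nu_{t,x}$ supported there onto the single point $f_j$; hence the composition $f$ with the Young measure is still realized as a function of $(t,x)$ even though $v_\varepsilon$ need not converge. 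Concretely, writing $v(t,x)$ for the barycenter of $\nu_{t,x}$ (or any measurable selection consistent with the Dirac parts), I would verify that
\begin{equation*}
\int_{\R} f(\lambda)\,d\nu_{t,x}(\lambda)=f(v(t,x)) \quad \text{a.e. } (t,x),\ x>0,
\end{equation*}
because on the nonconstant intervals $\nu_{t,x}$ is already a Dirac mass and on each constant interval $f$ is constant. This identity, combined with Theorem \ref{pablo}, gives exactly the weak (hence, after the pointwise identification, $L^1_{loc}$) convergence $f(v_\varepsilon)H(x)\to f(v)H(x)$ asserted in \eqref{brg_5}.

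The second ingredient is the existence of traces. Having established \eqref{brg_5}, I would show that $v$ (or rather the flux) is a quasi-solution to an appropriate scalar law at the interface. Letting $\varepsilon\to 0$ in the regularized equation \eqref{aug2817} and invoking Lemmas \ref{Lbbound}--\ref{entrbound} exactly as in the proof of Theorem \ref{th-main} (where passing to the limit in \eqref{mur_3} showed $v$ solves \eqref{bgr_10}), the limit flux $W:=f(v)H(x)+g(v)H(-x)$ satisfies a conservation law whose flux vector is nonconstant on nondegenerate intervals precisely where $f$ (resp. $g$) is; Theorem \ref{existenceoftraces}, applied to the nonconstant pieces, then yields left and right strong traces at $x=0$ for the flux, and hence for $v$ along any interval on which $f$ or $g$ is injective.

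The main obstacle I expect is the identification step on the constant intervals: one must be careful that $v$ itself is genuinely only determined modulo the flat parts of $f$ (and, for $x<0$, of $g$), so the ``function $v$'' in the statement cannot be read as the strong $L^1_{loc}$ limit of $v_\varepsilon$ everywhere. The clean way around this is to never claim convergence of $v_\varepsilon$, but only of the \emph{fluxes} $f(v_\varepsilon)$ and $g(v_\varepsilon)$, and to use that $f$ and $g$ collapse exactly those intervals where compactness fails. A subtle point requiring care is that the same limiting object $v$ must serve both the $x>0$ flux $f(v)$ and the $x<0$ flux $g(v)$; since $f$ and $g$ have \emph{different} flat intervals, I would select $v$ as a single measurable function consistent with the Dirac parts of $\nu_{t,x}$ on the union of all nonconstant intervals of both $f$ and $g$, and note that on the remaining (doubly-flat) set the choice of $v$ is irrelevant to both fluxes. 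The existence-of-traces conclusion then follows from Theorem \ref{existenceoftraces} applied to each side, as above.
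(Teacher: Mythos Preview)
Your overall strategy is sound and reaches the same conclusion, but the route differs from the paper's. The paper avoids Young measures entirely in this lemma: it defines a concrete modification
\[
\tilde v_\varepsilon(t,x)=
\begin{cases}
v_\varepsilon(t,x), & v_\varepsilon \text{ lies in a nonconstant interval of } f \ (x>0)\ \text{or of } g\ (x\le 0),\\
\text{left endpoint}, & v_\varepsilon \text{ lies in a constant interval},
\end{cases}
\]
so that $f(v_\varepsilon)H(x)+g(v_\varepsilon)H(-x)=f(\tilde v_\varepsilon)H(x)+g(\tilde v_\varepsilon)H(-x)$ identically, and then writes $\tilde v_\varepsilon$ as a telescoping sum of truncations $s_{a_i,a_{i+1}}(\tilde v_\varepsilon)$ (resp.\ $s_{b_i,b_{i+1}}$), each of which is precompact by Lemma~\ref{l20}. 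The limit $v$ is then an honest $L^1_{loc}$ limit of $\tilde v_\varepsilon$, and the traces follow because each truncated limit $v^{a_i,a_{i+1}}$ is a quasi-solution on an interval where the flux is nowhere constant, so Theorem~\ref{existenceoftraces} applies termwise and the telescoping sum inherits the trace. Your Young-measure route is a legitimate alternative and is arguably cleaner conceptually, but the paper's construction buys you a \emph{concrete} $v$ (rather than a barycenter or selection) and sidesteps the measure-theoretic bookkeeping.

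Two points in your sketch need tightening. First, the sentence ``weak (hence, after the pointwise identification, $L^1_{loc}$) convergence'' is not a valid inference as written: knowing $\int f\,d\nu_{t,x}=f(v(t,x))$ gives only weak convergence of $f(v_\varepsilon)$. You need the stronger fact that the \emph{push-forward} $f_*\nu_{t,x}$ is a Dirac mass, which is what actually follows from your analysis (on nonconstant intervals the truncation limit forces $\nu_{t,x}$ to concentrate; on constant intervals $f$ collapses the support). State and use this. Second, your trace argument is too vague: Theorem~\ref{existenceoftraces} requires the flux pair to be nonconstant on \emph{every} nondegenerate interval, so you cannot apply it to $v$ directly when $f$ has flats. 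The paper's fix---apply the theorem to each truncated limit $v^{a_i,a_{i+1}}$ separately (where the flux \emph{is} genuinely nonlinear on the relevant range) and then sum---is the missing step you should supply. Finally, your worry about choosing a single $v$ serving both sides is unnecessary: $v$ is defined separately on $\{x>0\}$ and $\{x<0\}$, and the paper's $\tilde v_\varepsilon$ already uses different collapsing rules on the two half-lines.
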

\begin{proof}
Denote
\begin{equation}
\label{brg_3} \tilde{v}_\varepsilon(t,x)=\begin{cases}
v_\varepsilon(t,x), & v_\varepsilon(t,x)\notin
\ccup\limits_{i=1}^{k_r}[a_{l_i},a_{{l_i}+1}), \ \ x> 0\\
v_\varepsilon(t,x), & v_\varepsilon(t,x)\notin
\ccup\limits_{i=1}^{k_l}[b_{l_i},b_{{l_i}+1}), \ \ x\leq
0,\\
a_{r_j}, & v_\varepsilon(t,x)\in [a_{r_j},a_{r_{j}+1}], \ \ x>0,\\
b_{l_j}, & v_\varepsilon(t,x)\in [b_{l_j},b_{l_{j}+1}], \ \ x\leq 0.
\end{cases}
\end{equation} Notice that $f(v_\varepsilon)H(x)+g(v_\varepsilon)H(-x)=f(\tilde{v}_\varepsilon)H(x)+g(\tilde{v}_\varepsilon)H(x)$
according to assumptions \eqref{cond_brg}. Then, notice that
\begin{equation}
\label{brg_6}
\begin{split}
\tilde{v}_\varepsilon\!=\!H(x)\left(\sum\limits_{i=1}^{n_r}s_{a_i,a_{i+1}}(\tilde{v}_\varepsilon)\!-\!\sum\limits_{i=2}^{n_r-1}a_i\right)
\!+\!H(-x)\left(\sum\limits_{i=1}^{n_l}s_{b_i,b_{i+1}}(\tilde{v}_\varepsilon)\!-\!\sum\limits_{i=2}^{n_l-1}b_i\right).
\end{split}
\end{equation}According to Lemma \ref{l20} and the definition of the
function $\tilde{v}_\varepsilon$, it is easy to see that
$(\tilde{v}_\varepsilon)$ is strongly precompact in
$L^1_{loc}(\R^+\times \R)$ (since this property has each of the
summands on the right-hand side of \eqref{brg_6}). Denote an
accumulation point of the family $(\tilde{v}_\varepsilon)$ by $v$.
Clearly, the function $v$ satisfies \eqref{brg_5}.

In order to prove that the function $v$ admits traces at the
interface, denote by $H(x)v^{a_i a_{i+1}}$, $i=1,\dots,n_r$, and
$H(-x)v^{b_i b_{i+1}}$, $i=1,\dots,n_l$, strong $L^1_{loc}$-limits
along subsequences of the families
$(s_{a_i,a_{i+1}}(\tilde{v}_\varepsilon))$, $i=1,\dots,n_r$, and
$(s_{b_i,b_{i+1}}(\tilde{v}_\varepsilon))$, $i=1,\dots,n_l$,
respectively. From \eqref{brg_6}, it follows:
\begin{equation}
\label{brg_6'}
\begin{split}
{v}=H(x)\left(\sum\limits_{i=1}^{n_r}v^{a_i,a_{i+1}}-\sum\limits_{i=2}^{n_r-1}a_i\right)
+H(-x)\left(\sum\limits_{i=1}^{n_l}v^{b_i,b_{i+1}}-\sum\limits_{i=2}^{n_l-1}b_i\right).
\end{split}
\end{equation} Also, notice that $H(x)v^{a_i a_{i+1}}$,
$i=1,\dots,n_r$, and $H(-x)v^{b_i b_{i+1}}$, $i=1,\dots,n_l$, are
quasi-solutions to \eqref{oslo3'}. Therefore, according to Theorem
\ref{existenceoftraces}, they admit strong traces at $x=0$. From
\eqref{brg_6'}, we see that $v$ admits strong traces as well.
\end{proof}

Now, we can prove the main theorem of the paper.

\begin{theorem}
\label{t22} Assume that the functions $\alpha$ and $\beta$ from
Definition \ref{def-adm} are such that the functions $f_\alpha$ and
$g_\beta$ satisfy the crossing conditions. Then, there exists a
unique $(\alpha,\beta)$-entropy admissible weak solution to
\eqref{oslo3}.
\end{theorem}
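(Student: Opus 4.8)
The plan is to combine the compactness already extracted in Lemmas \ref{l20} and \ref{l21} with the measure-valued solution framework of Theorem \ref{pablo}, and then to reduce uniqueness to the stability statement of Theorem \ref{kenn}. First I would assume $u_0\in BV(\R)$ and let $(v_\varepsilon)$ be the family of solutions to the regularized problem \eqref{aug2817}, which is bounded in $L^\infty$ by Lemma \ref{Lbbound}; from it I extract the Young measure $\nu_{t,x}$ via Theorem \ref{pablo}. By Lemma \ref{l21} (applied to the transformed fluxes $f_\alpha$ and $g_\beta$, whose constant intervals are the images under $\alpha^{-1}$, $\beta^{-1}$ of those of $f$, $g$ and hence again satisfy \eqref{cond_brg}), the composite flux $H(x)f_\alpha(v_\varepsilon)+H(-x)g_\beta(v_\varepsilon)$ converges strongly in $L^1_{\rm loc}$ to $H(x)f_\alpha(v)+H(-x)g_\beta(v)$ for a limit function $v$ admitting left and right strong traces at $x=0$. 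The crucial structural fact, furnished by Lemma \ref{l20}, is the dichotomy: on every interval where $f_\alpha$ (resp.\ $g_\beta$) is non-constant the truncations $s_{l,k}(v_\varepsilon)$ converge strongly, so $\nu_{t,x}$ is a Dirac mass there, whereas on the remaining intervals the flux (and every entropy flux built from it) is constant and therefore insensitive to the oscillations recorded by $\nu_{t,x}$. Setting $u=\alpha(v)H(x)+\beta(v)H(-x)$, this shows that $H(x)f(u)+H(-x)g(u)$ is precisely the strong $L^1_{\rm loc}$-limit of the approximate fluxes, so $u$ is a weak solution of \eqref{oslo3}.

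Next I would verify the $(\alpha,\beta)$-entropy inequality \eqref{sep827} by passing to the limit $\varepsilon\to0$ in the approximate entropy identity \eqref{mur_1}. The viscous remainders are controlled exactly as in Lemma \ref{11}, using Lemmas \ref{regintime} and \ref{entrbound}; the measure $\mu^\varepsilon_\xi$ has a nonnegative distributional limit; and the concentration term $\delta_\varepsilon(f_\alpha(\xi)-g_\beta(\xi))$ produces the Dirac contribution at $x=0$. All the nonlinear expressions appearing under $\pa_x$, as well as the entropy densities under $\pa_t$, are continuous functions composed with $f_\alpha$, $g_\beta$, hence they pass to the limit by the same dichotomy (strong convergence where the flux is non-constant, constancy elsewhere). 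Consequently $u$ satisfies \eqref{sep827}, i.e.\ $u$ is $(\alpha,\beta)$-entropy admissible in the sense of Definition \ref{def-adm}.

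For uniqueness, observe that by construction $v$ is a Kruzhkov-type entropy solution in the sense of Definition \ref{kar1} of the transformed equation \eqref{aux_mar}, admits strong traces at $x=0$ (Lemma \ref{l21}), and the fluxes $f_\alpha$, $g_\beta$ satisfy the crossing condition by hypothesis. These are exactly the hypotheses of Theorem \ref{kenn}, which yields the $L^1$-stability estimate \eqref{new_version} for any two such solutions; in particular the $(\alpha,\beta)$-entropy admissible solution is unique when $u_0\in BV(\R)$. Finally, for general $u_0\in L^\infty$ I would approximate $u_0$ by $BV$ data $u_{0\delta}\to u_0$ in $L^1_{\rm loc}$ and pass to the limit using \eqref{new_version} and the Cauchy criterion, exactly as in the proof of Theorem \ref{th-main}; the limit is the unique $(\alpha,\beta)$-entropy admissible solution and retains strong traces at $x=0$ by Theorem \ref{existenceoftraces}.

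The main obstacle is the passage to the limit in the entropy inequality in the absence of strong compactness of $(v_\varepsilon)$ itself: on the intervals where \eqref{cond_brg} forces $f$ or $g$ to be constant, genuine nonlinearity fails and $\nu_{t,x}$ need not reduce to a Dirac mass. The resolution, and the technical heart of the argument, is to localize with the truncations $s_{l,k}$ of Lemma \ref{l20} so that strong convergence is recovered on every genuinely nonlinear segment, while checking that the oscillations surviving on the constant segments never enter the flux or the entropy flux; this is what guarantees that the limiting object is a genuine function solution rather than merely a measure-valued one.
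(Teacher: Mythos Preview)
Your existence argument contains a genuine gap at the passage to the limit in the entropy inequality. You claim that ``the entropy densities under $\pa_t$ are continuous functions composed with $f_\alpha$, $g_\beta$'', but this is false: for $x>0$ the entropy density in \eqref{sep827} is ${\rm sgn}(v-\xi)(\alpha(v)-\alpha(\xi))$, and $\alpha$ is a strictly increasing bijection, hence \emph{non-constant} on every interval, including the intervals where $f_\alpha$ is constant. Your dichotomy (``strong convergence where the flux is non-constant, constancy elsewhere'') therefore does not apply to the $\pa_t$-term. On those intervals the oscillations recorded by $\nu_{t,x}$ \emph{do} enter the entropy density, and after passing to the limit in \eqref{mur_1} you obtain only
\[
\pa_t\!\int_{\R}{\rm sgn}(\lambda-\xi)\big(H(x)(\alpha(\lambda)-\alpha(\xi))+H(-x)(\beta(\lambda)-\beta(\xi))\big)\,d\nu_{t,x}(\lambda)+\dots\le 0,
\]
a measure-valued inequality, not \eqref{sep827} for a function $u$. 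At this stage you have not produced an $(\alpha,\beta)$-entropy admissible solution.

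The paper closes exactly this gap by \emph{first} accepting the measure-valued inequality and \emph{then} proving that $\nu_{t,x}$ collapses to a Dirac mass via a DiPerna-type argument: one doubles the variables between the Young measure $\nu_{t,x}$ and a second Young measure $\sigma_{t,x}$ associated to a second approximating sequence with the same initial data; since the fluxes $f_\alpha(v)$, $g_\beta(v)$ do converge strongly and admit traces (Lemma \ref{l21}), and since $f_\alpha$, $g_\beta$ satisfy the crossing condition, the interface term is handled as in \cite{kar3}, yielding the Kato-type estimate \eqref{brg_20}. Setting $u_0=v_0$ there forces $\nu_{t,x}=\sigma_{t,x}=\delta_{u(t,x)}$, and only then does one obtain a genuine function solution satisfying \eqref{sep827}. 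Your outline skips this step and hence does not establish existence. The uniqueness part suffers from the same issue: a general $(\alpha,\beta)$-entropy admissible solution $u$ need not have strong traces at $x=0$; only the modified function $\tilde u$ (collapsing the flat intervals of the flux) does, and the interface term $\tilde J(h)$ must be rewritten in terms of $\tilde u$, $\tilde v$ before invoking \cite[Theorem 2.1]{kar3}.
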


\begin{proof}
At the beginning, assume that  $u_0\in BV(\R;[a,b])$ and, as usual,
denote by $(v_\varepsilon)$ the family of solutions to
\eqref{aug2817}. By applying the standard procedure (see proof of
Lemma \ref{11}), it is not difficult to see that every
$v_\varepsilon$ satisfies for every $\xi\in \R$:
\begin{align}
\label{brg_11} &\pa_t \left( {\rm
sign}(v_\varepsilon-\xi)\left((\alpha(v_\varepsilon)-\alpha(\xi))H(x)+
(\beta(v_\varepsilon)-\beta(\xi))H(-x)\right)
\right)\\&+\pa_x\left((f_\alpha(v_\varepsilon)-f_\alpha(\xi))H(x)+
(g_\beta(v_\varepsilon)-g_\beta(\xi))H(-x)\right) \leq {\cal
O}_{{\cal D}'}(\varepsilon), \nonumber
\end{align} where ${\cal
O}_{{\cal D}'}(\varepsilon)$ is a family of distributions tending to
zero in the sense of distributions as $\varepsilon\to 0$. Letting
$\varepsilon\to 0$ in \eqref{brg_11} and taking Lemma \ref{l21} and
Theorem \ref{pablo} into account, we obtain in ${\cal D}'(\R^+\times
\R)$:
\begin{align}
\label{brg_12} &\pa_t \int_{\R}{\rm
sign}(\lambda-\xi)\left((\alpha(\lambda)-\alpha(\xi))H(x)+
(\beta(\lambda)-\beta(\xi))H(-x)
\right)d\nu_{t,x}(\lambda)\\&+\pa_x\left((f_\alpha(v)-f_\alpha(\xi))H(x)+
(g_\beta(v)-g_\beta(\xi))H(-x)\right) \leq 0, \nonumber
\end{align} where $\nu_{t,x}$ is a Young measure corresponding to
the sequence $(v_\varepsilon)$, and $v$ is the function satisfying
\eqref{brg_5}. The Young measure $\nu_{t,x}$ and the function $v$
(admitting strong traces at $x=0$), we shall call an
$(\alpha,\beta)$-entropy admissible measure valued solution to
\eqref{oslo3}.

Denote by $\sigma_{t,x}$ a Young measure and by $w$ a function
representing an $(\alpha,\beta)$-entropy admissible measure valued
solution to \eqref{oslo3} corresponding to initial data $v_0\in
BV(\R;[a,b])$.

Using the classical arguments by DiPerna \cite{Dpe}, we conclude
that for any test function $\varphi\in
C^1_{0}(\R^+\times(\R\backslash \{0\}))$ it holds (keep in mind that
$\alpha$ and $\beta$ are strictly increasing functions):

\begin{align}
\label{brg_13}
&\int_{\R^+\times\R}\!\int_{\R^2}\left(|\alpha(\lambda)\!-\!\alpha(\xi)|H(x)\!+\!
|\beta(\lambda)\!-\!\beta(\eta)|H(-x) \right)\pa_t\varphi
d\nu_{t,x}(\lambda)d\sigma_{t,x}(\eta)dx
dt\\&\!+\!\int_{\R^+\times\R}\!\left((f_\alpha(v)\!-\!f_\alpha(w))H(x)\!+\!
(g_\beta(v)\!-\!g_\beta(w))H(-x)\right)\pa_x\varphi dx dt \geq 0.
\nonumber
\end{align} Now, we follow \cite{kar3}. Take the function
$$
\mu_h(x)=
\begin{cases}
\frac{1}{h}(x+h), & x\in [-2h,-h]\\
1, & x\in [-h,h]\\
\frac{1}{h}(2h-x)\\
0, & |x|>2h
\end{cases},
$$ and for an arbitrary $\psi\in C^1_0(\R^+\times \R)$, put $\varphi= (1-\mu_h) \psi$ in
\eqref{brg_13}. We obtain:
\begin{align}
\label{brg_14}
&\int_{\R^+\times\R}\!\int_{\R^2}\!\left(|\alpha(\lambda)\!-\!\alpha(\xi)|H(x)\!+\!
|\beta(\lambda)\!-\!\beta(\eta)|H(-x) \right)\pa_t\psi
d\nu_{t,x}(\lambda)d\sigma_{t,x}(\eta)dx
dt\\&+\!\int_{\R^+\times\R}\!\!\left((f_\alpha(v)\!-\!f_\alpha(w))H(x)\!+\!
(g_\beta(v)\!-\!g_\beta(w))H(-x)\right)\pa_x\psi dx dt \geq
\!-\!J(h)\!+\!{\cal O}(h), \nonumber
\end{align} where $J(h)=\int_{\R^+\times \R}\left(\left((f_\alpha(v)-f_\alpha(w))H(x)+
(g_\beta(v)-g_\beta(w))H(-x)\right) \right)\mu_h'\psi dx dt$, while
${\cal O}(h)$ is the standard Landau symbol. Since $v$ and $w$ admit
strong traces at $x=0$, and since $f_\alpha$ and $g_\beta$ satisfy
the crossing conditions, as in \cite[Theorem 2.1 ]{kar3}, we
conclude that $\lim\limits_{h\to 0}J(h)\geq 0$. From here, after
letting $h\to 0$ in \eqref{brg_14}, we conclude:
\begin{align*}
&\int_{\R^+\times\R}\int_{\R^2}\left(|\alpha(\lambda)-\alpha(\eta)|H(x)+
|\beta(\lambda)-\beta(\eta)|H(-x) \right)\pa_t\psi
d\nu_{t,x}(\lambda)d\sigma_{t,x}(\eta)dx
dt\\&+\int_{\R^+\times\R}\left((f_\alpha(v)-f_\alpha(w))H(x)+
(g_\beta(v)-g_\beta(w))H(-x)\right)\pa_x\psi dx dt \geq 0, \nonumber
\end{align*} and from here, using well known procedure \cite{Kru},
we conclude that for any $T,R>0$ and appropriate $C,\bar{R}$
depending on $R$, the functions $f$, $g$, $\alpha$ and $\beta$:
\begin{align}
\label{brg_20}
\int_{0}^T\!\int_{-R}^{R}\!\int_{\R^2}\!\!\left(|\alpha(\lambda)\!-\!\alpha(\xi)|H(x)\!+\!
|\beta(\lambda)\!-\!\beta(\eta)|H(-\!x)
\right)d\nu_{t,x}(\lambda)d\sigma_{t,x}(\eta) dx dt&\\ \leq C
T\int_{-\bar{R}}^{\bar{R}}|u_0-v_0|dx&. \nonumber
\end{align} Taking $u_0=v_0$, we see from \eqref{brg_20} that for
almost every $(t,x)\in [0,T]\times \R$  the Young measures
$\nu_{t,x}$ and $\sigma_{t,x}$ are the same and they are supported
at the same point (since $\alpha$ and $\beta$ are increasing
functions). This actually means that
$\sigma_{t,x}(\lambda)=\nu_{t,x}(\lambda)=\delta(\lambda-u(t,x))$
for a function $u$, where $\delta$ is the Dirac $\delta$ function.
From Theorem \ref{pablo}, we conclude that $v_\varepsilon\to u$
strongly in $L^1_{loc}(\R^+\times \R)$ along a subsequence. The
function $u$ will obviously represent the $(\alpha,\beta)$-entropy
admissible solution to \eqref{oslo3}.

In order to prove that $u$ is a unique $(\alpha,\beta)$-entropy
admissible solution to \eqref{oslo3}, we basically need to repeat
the procedure from the first part of the proof.

Accordingly, take two $(\alpha,\beta)$-entropy admissible solutions
$u$ and $v$ to \eqref{oslo3} corresponding to initial data $u_0$ and
$v_0$, respectively. By using the same argumentation as before, we
reach to the relation analogical to \eqref{brg_14}:
\begin{align}
\label{brg_14'} &\int_{\R^+\times\R}\left(|\alpha(u)-\alpha(v)|H(x)+
|\beta(u)-\beta(v)|H(-x) \right)\pa_t\psi dx
dt\\&\!+\!\int_{\R^+\times\R}\left((f_\alpha(u)\!-\!f_\alpha(v))H(x)\!+\!
(g_\beta(u)\!-\!g_\beta(v))H(-x)\right)\pa_x\psi dx dt \!\geq\!
-\!\tilde{J}(h)\!+\!{\cal O}(h), \nonumber
\end{align} where $\tilde{J}(h)=\int_{\R^+\times\R}\left((f_\alpha(u)-f_\alpha(v))H(x)+
(g_\beta(u)-g_\beta(v))H(-x)\right) \mu_h'(x) \psi dx dt$.

Introduce the functions
\begin{align*}
&\tilde{u}(t,x)=\begin{cases} u(t,x), & u(t,x)\notin
\ccup\limits_{i=1}^{k_r}[a_{l_i},a_{{l_i}+1}), \ \ x> 0\\
u(t,x), & u(t,x)\notin
\ccup\limits_{i=1}^{k_l}[b_{l_i},b_{{l_i}+1}), \ \ x\leq
0,\\
a_{r_j}, & u(t,x)\in [a_{r_j},a_{r_{j}+1}], \ \ x>0,\\
b_{l_j}, & u(t,x)\in [b_{l_j},b_{l_{j}+1}], \ \ x\leq 0.
\end{cases},\\
&\tilde{v}(t,x)=\begin{cases} v(t,x), & v(t,x)\notin
\ccup\limits_{i=1}^{k_r}[a_{l_i},a_{{l_i}+1}), \ \ x> 0\\
v(t,x), & v(t,x)\notin
\ccup\limits_{i=1}^{k_l}[b_{l_i},b_{{l_i}+1}), \ \ x\leq
0,\\
a_{r_j}, & v(t,x)\in [a_{r_j},a_{r_{j}+1}], \ \ x>0,\\
b_{l_j}, & v(t,x)\in [b_{l_j},b_{l_{j}+1}], \ \ x\leq 0.
\end{cases}
\end{align*} Using the same arguments as in Lemma \ref{l21},
we conclude that the functions $\tilde{u}$ and $\tilde{v}$ have strong traces at the interface
$x=0$. Moreover,
$f(u)H(x)+g(u)H(-x)=f(\tilde{u})H(x)+g(\tilde{u})H(-x)$ and
$f(v)H(x)+g(v)H(-x)=f(\tilde{v})H(x)+g(\tilde{v})H(-x)$. Having this
in mind, we conclude
$$
\lim\limits_{h\to 0}
J(h)=-\int_0^T\left((f_\alpha(\tilde{u}^+)-f_\alpha(\tilde{v}^+))H(x)+
(g_\beta(\tilde{u}^-)-g_\beta(\tilde{v}^-))H(-x)\right)  \psi(t,0)
dt,
$$ where $\tilde{u}^+$ and $\tilde{u}^-$ are right and left traces
of the function $\tilde{u}$, while $\tilde{v}^+$ and $\tilde{v}^-$
are right and left traces of the function $\tilde{v}$. Now, relying
on \cite[Theorem 2.1]{kar3} again, we conclude that
$\lim\limits_{h\to 0} \tilde{J}(h) \leq 0$. From here, letting $h\to
0$ in \eqref{brg_14'}, we obtain:
\begin{align}
\label{brg_30}
 &\int_{\R^+\times\R}\left(|\alpha(u)-\alpha(v)|H(x)+ |\beta(u)-\beta(v)|H(-x)
\right)\pa_t\psi dx
dt\\&+\int_{\R^+\times\R}\left((f_\alpha(u)-f_\alpha(v))H(x)+
(g_\beta(u)-g_\beta(v))H(-x)\right)\pa_x\psi dx dt \geq 0, \nonumber
\end{align} and from here, as usual,
\begin{align*}
 &\int_{0}^T\int_{-R}^R\left(|\alpha(u)-\alpha(v)|H(x)+ |\beta(u)-\beta(v)|H(-x)
\right) dx dt\\&\leq C T\int_{-\bar{R}}^{\bar{R}}{\rm
sign}(u-v)\left((\alpha(u_0)-\alpha(v_0))H(x)+
(\beta(u_0)-\beta(v_0))H(-x) \right) dx dt. \nonumber
\end{align*}Since $\alpha$ and $\beta$ are increasing functions on the range
of $u$ and $v$, from the above we immediately obtain the $L^1_{loc}$
stability of the $(\alpha,\beta)$-entropy admissible solutions to
\eqref{oslo3}.

Now, as in the last part of the proof of Theorem \ref{th-main}, we
consider the case $u_0\notin BV(\R)$. We recall briefly the
arguments providing the statement of the theorem in this case.
First, we take a sequence $(u_{0\varepsilon})$ of the functions of
bounded variation such that $u_{0\varepsilon}\to u_0$ in
$L^1_{loc}(\R)$. Then, we take the sequence $(u_\varepsilon)$ of
$(\alpha,\beta)$-entropy admissible solutions to \eqref{oslo3} with
$u_0=u_{0\varepsilon}$. The sequence $(u_\varepsilon)$ satisfy:
$$
\int_0^T\int_{-R}^R|u_{\varepsilon_1}-u_{\varepsilon_2}|dxdt\leq
CT\int_{-\bar{R}}^{\bar{R}}|u_{0\varepsilon_1}-u_{0\varepsilon_2}|dx,
$$ where $R$ and $T$ are arbitrary positive constants, and
$C, \bar{R}$ are constants depending on $R$, the functions $f$, $g$,
$\alpha$ and $\beta$. This readily implies that the sequence
$(u_\varepsilon)$ is convergent in $L^1_{loc}(\R^+\times\R)$. Its
limit is clearly an $(\alpha, \beta)$-entropy admissible solution to
\eqref{oslo3}. Uniqueness of such $(\alpha, \beta)$-entropy
admissible solution is proved in the completely same way as when
$u_0\in BV(\R;[a,b])$.
\end{proof}

A simple corollary of Theorem \ref{t22} is the maximum principle for
an $(\alpha,\beta)$-entropy admissible solution to \eqref{oslo3}.

\begin{corollary}
\label{c23} Assume that $u$ and $v$ are two
$(\alpha,\beta)$-admissible weak solutions to \eqref{oslo3}
corresponding to the initial data $u_0\in L^1(\R;[a,b])$ and $v_0\in
L^1(\R;[a,b])$ such that $u_0(x)\leq v_0(x)$ for a.e. $x\in \R$.
Furthermore, assume that $f_\alpha$ and $g_\beta$ satisfy the
crossing conditions. Then, it holds
$$
u(t,x)\leq v(t,x) \ \ a.e. \ \ (t,x)\in \R^+\times \R.
$$
\end{corollary}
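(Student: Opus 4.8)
The plan is to prove the maximum principle in exactly the same way as the $L^1$-stability statement of Theorem \ref{t22}, with the single modification that everywhere the symmetric Kruzhkov entropy $s\mapsto|s|$ and its flux are replaced by the one-sided (semi-Kruzhkov) entropy $s\mapsto s^+=\max\{s,0\}$ and the corresponding one-sided flux. The first observation I would record is that each $(\alpha,\beta)$-entropy admissible solution automatically satisfies the one-sided version of the entropy inequality \eqref{sep827}: since $(v-\xi)^+=\tfrac12\big(|v-\xi|+(v-\xi)\big)$ and since $v=\tilde\alpha(u)H(x)+\tilde\beta(u)H(-x)$ is a weak solution of \eqref{oslo3'} (so the plain equation holds with equality in ${\cal D}'$), averaging the admissibility condition (D.2) with the weak formulation yields the inequality obtained from \eqref{sep827} by replacing $\mathrm{sgn}(v-\xi)$ with $H(v-\xi)$. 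Equivalently, one redoes the viscosity limit of the proof of Lemma \ref{11}, where the half-entropy $\eta'(\la)=H(\la-\xi)$ is already the starting point, so that Lemmas \ref{l20}, \ref{l21} and the existence of strong traces apply verbatim.

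Next I would mirror the second part of the proof of Theorem \ref{t22}, comparing the two given admissible solutions $u$ and $v$ directly. Doubling the spatial variable, integrating against $\varphi=(1-\mu_h)\psi$ with $0\le\psi\in C^1_0(\R^+\times\R)$, and letting $h\to0$, I would reach the one-sided analogue of \eqref{brg_30},
\begin{align}
&\int_{\R^+\times\R}\left((\alpha(u)-\alpha(v))^+H(x)+(\beta(u)-\beta(v))^+H(-x)\right)\pa_t\psi\,dx\,dt\nonumber\\
&+\int_{\R^+\times\R}H(u-v)\left((f_\alpha(u)-f_\alpha(v))H(x)+(g_\beta(u)-g_\beta(v))H(-x)\right)\pa_x\psi\,dx\,dt\ge0,\nonumber
\end{align}
where I have used that $\alpha,\beta$ are strictly increasing, so that $(\alpha(u)-\alpha(v))^+=H(u-v)(\alpha(u)-\alpha(v))$ and likewise for $\beta$.

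The step I expect to be the main obstacle is, exactly as in the uniqueness argument, controlling the interface contribution $\tilde J(h)$, which is now built from the one-sided flux $H(u-v)(f_\alpha(u)-f_\alpha(v))$ rather than from its modulus. Using the strong traces $\tilde u^{\pm},\tilde v^{\pm}$ furnished by Lemma \ref{l21} together with the crossing condition on $f_\alpha$ and $g_\beta$, I would show $\lim_{h\to0}\tilde J(h)\le0$ precisely as in \cite[Theorem 2.1]{kar3}; it is the crossing condition that forces the one-sided interface dissipation to carry the favorable sign, and verifying this sign for the positive-part entropy (rather than for $|\cdot|$) is the only genuinely new bookkeeping. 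Granting this, the interface term drops out in the limit and the displayed inequality holds for every admissible $0\le\psi$.

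Finally, I would conclude by the standard Kruzhkov localization: choosing test functions $\psi$ approximating the indicator of a backward truncated cone (whose aperture is fixed by the Lipschitz constants of $f_\alpha,g_\beta$ on $[a,b]$), the one-sided Kato inequality integrates to
\[
\int_{-R}^{R}\left((\alpha(u)-\alpha(v))^+H(x)+(\beta(u)-\beta(v))^+H(-x)\right)(t,x)\,dx\le C\int_{-\bar R}^{\bar R}\left((\alpha(u_0)-\alpha(v_0))^+H(x)+(\beta(u_0)-\beta(v_0))^+H(-x)\right)dx
\]
for a.e.\ $t$. The hypothesis $u_0\le v_0$ makes the right-hand side vanish, whence both positive parts vanish for a.e.\ $(t,x)$; since $\alpha,\beta$ are strictly increasing this gives $u\le v$ a.e., as claimed. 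The passage from $u_0,v_0\in BV(\R;[a,b])$ to general $L^1(\R;[a,b])$ data is handled by the same $BV$-approximation and $L^1_{loc}$-stability argument used at the end of the proof of Theorem \ref{t22}, the only extra remark being that the ordering $u_{0}\le v_{0}$ is preserved along the approximating sequences (e.g.\ by using order-preserving mollification/truncation).
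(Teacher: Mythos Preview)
Your proposal is correct and follows essentially the same route as the paper: both arguments rest on the identity $(s)^+=\tfrac12(|s|+s)$ (equivalently $\mathrm{sign}_+=\tfrac12(\mathrm{sign}+1)$), which turns the admissibility inequality \eqref{sep827} plus the weak formulation into its one-sided counterpart, after which the proof of Theorem \ref{t22} is rerun verbatim to reach the one-sided analogue of \eqref{brg_30} and the Kruzhkov cone estimate. The paper's own proof is in fact a two-line remark recording exactly this averaging trick and then invoking ``standard arguments''; your version spells out more carefully the interface-term bookkeeping and the $BV$ approximation step, but there is no substantive difference in strategy.
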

\begin{proof}
It is enough to notice that, since $|u|^+=\frac{|u|+u}{2}$, i.e.
${\rm sign}_+(u)=(|u|^+)'=\frac{{\rm sign}(u)+1}{2}$, relation
\eqref{brg_30} holds if we replace there ${\rm sign}$ by ${\rm
sign_+}$. From that relation, the standard arguments provide
$$
\int_0^T\int_{-R}^R|u(t,x)-v(t,x)|^+dx dt \leq C T
\int_{-\bar{R}}^{\bar{R}}|u_0(x)-v_0(x)|^+dx.
$$ From here, the statement of the corollary immediately follows.
\end{proof}

Now, we shall prove that we can always find $\alpha$ and $\beta$ so
that there exists a unique $(\alpha,\beta)$-entropy admissible
solutions to \eqref{oslo3}.

\begin{figure}[htp]
\begin{center}
  \includegraphics[width=4in]{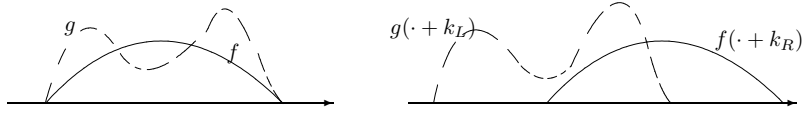}\\
  \caption{Functions $f$ (normal line) and $g$
(dashed line) on the left plot do not satisfy the crossing
condition. On the other hand, for appropriate $k_L>k_R$, the
functions $f(\cdot+k_R)$ and $g(\cdot+k_L)$ on the right plot
satisfy the crossing conditions.}
 \label{3}
\end{center}
\end{figure}

%\placedrawing{cross_2.lp}{Functions $f$ (normal line) and $g$
%(dashed line) on the left plot do not satisfy the crossing
%condition. On the other hand, for appropriate $k_L>k_R$, the
%functions $f(\cdot+k_R)$ and $g(\cdot+k_L)$ on the right plot
%satisfy the crossing conditions.}{3}

\begin{theorem}
Denote by $\chi_{[a,b]}$ the characteristic function of the interval
$[a,b]$. For the functions $\alpha(u)=\alpha_T(u)=u+k_R$ and
$\beta(u)=\beta_T(u)=u+k_L$ such that the functions
$f^c_{\alpha}=(f\chi_{[a,b]})\circ \alpha$ and
$g^c_\beta=(g\chi_{[a,b]})\circ \beta$ satisfy the crossing
conditions, there exists a unique $(\alpha,\beta)$-entropy
admissible solution to \eqref{oslo3}.
\end{theorem}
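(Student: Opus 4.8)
The plan is to derive this statement as a direct application of Theorem \ref{t22}, the only real work being to verify that the translations $\alpha_T,\beta_T$ are legitimate choices in the sense of Definition \ref{def-adm} and that the truncation by $\chi_{[a,b]}$ is harmless. First I would record that $\tilde\alpha_T(u)=u-k_R$ and $\tilde\beta_T(u)=u-k_L$ are smooth strictly increasing bijections of $[a,b]$ onto $[a-k_R,b-k_R]$ and $[a-k_L,b-k_L]$; since Definition \ref{def-adm} permits the target intervals $[a',b']$ and $[a'',b'']$ to differ from $[a,b]$, the pair $(\alpha_T,\beta_T)$ is admissible. The whole point of using translations (rather than the self-maps of $[a,b]$ employed in Section 1) is precisely that they move the two ranges of $v$ apart, and this extra freedom is what makes the crossing condition attainable for fluxes such as the one in Figure \ref{referee}, where no increasing self-bijection of $[a,b]$ can succeed.

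Next I would justify the truncation. Because $f(a)=f(b)=g(a)=g(b)=0$, the zero-extensions $f\chi_{[a,b]}$ and $g\chi_{[a,b]}$ are continuous on $\R$, so $f^c_\alpha$ and $g^c_\beta$ are continuous functions defined on a common $v$-interval containing both ranges above. Moreover these extensions are constant (equal to zero) outside the respective ranges, so $f^c_\alpha$ and $g^c_\beta$ possess only finitely many intervals of constancy; that is, they fall exactly under hypothesis \eqref{cond_brg}, and the entire machinery of Section 2 (Lemmas \ref{l20}, \ref{l21} and Theorem \ref{t22}) is available for them. On the physical range the truncation is inactive: if $u\in[a,b]$, then for $x>0$ one has $v=u-k_R\in[a-k_R,b-k_R]$, whence $\alpha_T(v)=v+k_R=u\in[a,b]$ and $f^c_\alpha(v)=f(u)$, and symmetrically $g^c_\beta(v)=g(u)$ for $x<0$. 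Hence an $(\alpha_T,\beta_T)$-entropy admissible solution of the transformed problem is, after undoing the change of variables, a genuine $(\alpha,\beta)$-entropy admissible solution of \eqref{oslo3} taking values in $[a,b]$.

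With these two reductions in place, and with the crossing condition for $f^c_\alpha,g^c_\beta$ assumed in the hypothesis, I would invoke Theorem \ref{t22} verbatim to obtain existence and uniqueness of the $(\alpha_T,\beta_T)$-entropy admissible solution to \eqref{oslo3}.

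The step I expect to be the main obstacle is confirming that the a priori bounds underlying the construction survive the passage from self-maps to translations together with the zero-extension of the flux. Concretely, the regularized problem \eqref{aug2817} and the $L^\infty$-bound of Lemma \ref{Lbbound} were set up for $\alpha,\beta:[a,b]\to[a,b]$; here I would re-examine the invariant-region (maximum-principle) argument to show that $v_\varepsilon$ remains inside $[a-k_R,b-k_R]$ for $x>0$ and inside $[a-k_L,b-k_L]$ for $x<0$, so that the truncation is never triggered along the approximating sequence and Lemmas \ref{regintime}--\ref{entrbound} carry over unchanged. Once the sequence is thus confined to the physical range, the genuine-nonlinearity and strong-trace arguments of Section 2 apply to the merely continuous, zero-extended flux exactly as in Lemma \ref{l21}, and the conclusion follows.
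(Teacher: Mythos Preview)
Your overall strategy---apply Theorem \ref{t22} to the problem with truncated flux, then argue that the resulting solution stays in $[a,b]$ so that the truncation is inactive and one recovers a solution of \eqref{oslo3}---is exactly the paper's strategy. The reductions you list (translations are admissible in Definition \ref{def-adm}, the zero-extensions are continuous and satisfy \eqref{cond_brg}) are all correct.

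The gap is in the step you yourself flag as the main obstacle. You propose to establish the confinement $u\in[a,b]$ by proving \emph{side-dependent} invariant regions for the smooth approximations: $v_\varepsilon\in[a-k_R,b-k_R]$ for $x>0$ and $v_\varepsilon\in[a-k_L,b-k_L]$ for $x<0$. This cannot work as stated. In \eqref{aug2817} the Heaviside functions are mollified, and $v_\varepsilon$ is a single $C^\infty$ function on $\R$; when $k_L\neq k_R$ the two target intervals are different, and a smooth function cannot jump from one to the other across $x=0$. The standard parabolic maximum-principle argument only yields a \emph{single} invariant interval, namely the convex hull of the two (e.g.\ $[a-k_L,\,b-k_R]$ if $k_L>k_R$), because that is where both $f^c_\alpha$ and $g^c_\beta$ vanish at the endpoints. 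But this larger bound is not enough: undoing the change of variables gives, for $x>0$, only $u=v+k_R\in[a-(k_L-k_R),\,b]$, which does not force $u\ge a$. So the truncation \emph{can} be triggered along the approximating sequence, and your route to $u\in[a,b]$ does not close.

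The paper resolves this at the level of entropy solutions rather than approximations. It checks by direct computation (inequality \eqref{brg_40}) that the constant functions $u\equiv a$ and $u\equiv b$ are themselves $(\alpha_T,\beta_T)$-entropy admissible solutions of the truncated problem \eqref{oslo3''}; the verification uses precisely that the zero-extension kills the flux outside $[a,b]$. Then the comparison principle of Corollary \ref{c23}, applied to the truncated problem (whose $f^c_\alpha,g^c_\beta$ satisfy the crossing condition by hypothesis), gives $a\le u\le b$ for any admissible solution with $a\le u_0\le b$. Once that is known, the truncation is inactive on the solution and uniqueness transfers from \eqref{oslo3''} to \eqref{oslo3} exactly as you describe.
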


\begin{proof}
First, notice that it is always possible to find constants $k_R$ and
$k_L$ such that the translation functions $\alpha_T(u)=u+k_R$ and
$\beta_T(u)=u+k_L$ make $f^c_{\alpha_T}$ and $g^c_{\beta_T}$ to
satisfy the crossing conditions (see Figure \ref{3}). Furthermore,
the constants $a$ and $b$ represent $(\alpha_T,\beta_T)$-entropy
admissible solutions to
\begin{equation}
\label{oslo3''}
\begin{cases}
\pa_t u+\pa_x\left(H(x)(f\chi_{[a,b]})(u)+H(-x)(g\chi_{[a,b]})(u) \right)=0, & (t,x)\in  \R^+\times \R\\
u|_{t=0}=u_0(x)\in L^\infty(\R), & x\in \R.
\end{cases}
\end{equation}

Indeed, denoting $k(x)=\begin{cases} k_L, &x\leq 0\\
k_R, &x>0
\end{cases}$, according to Definition \ref{def-adm}, we see that we
need to check whether the function $v(t,x)=a-k(x)$ satisfies
\eqref{sep827}. After substituting it there, we see that we need to
check whether (see also \cite[Remark 2]{NHM_mit})

\begin{align}
\label{brg_40} \Big({\rm
sgn}(a-k_L-\xi)(g\chi_{[a,b]})(\xi+k_L)&-{\rm
sgn}(a-k_R-\xi)(f\chi_{[a,b]})(\xi+k_L)
\\&-|(f\chi_{[a,b]})(\xi+k_R)-(f\chi_{[a,b]})(\xi+k_L)|\Big)\delta(x)\leq 0,
\nonumber
\end{align} for every $\xi\in \R$. Clearly, if $\xi\in \R$ is such
that $\min\{a-k_R-\xi,a-k_L-\xi \}\! \geq \! 0$ or $\max
\{a-k_R-\xi,a-k_L-\xi \}\leq 0$, then \eqref{brg_40} holds with the
equality sign. Otherwise, it must hold $a-k_R-\xi \leq 0 \leq
a-k_L-\xi$ (see Figure \ref{3}). However, if this is a case, then
$\xi+k_L\leq a$. This implies $(g\chi_{[a,b]})(\xi+k_L)=0$ from
where \eqref{brg_40} easily follows. Similarly, we prove that
$u(t,x)\equiv b$ represents an $(\alpha_T,\beta_T)$-entropy
admissible solution to \eqref{oslo3}.

From here, using Corollary \ref{c23}, we conclude that, for the
$\alpha_T$ and $\beta_T$ chosen above (Figure \ref{3}), the
$(\alpha_T,\beta_T)$-entropy admissible solutions to \eqref{oslo3},
say $u$, such that $a\leq u_0 \leq b$, must satisfy $a \leq u(t,x)
\leq b$ for a.e. $(t,x)\in \R^+\times \R$. This actually means that
the $(\alpha_T,\beta_T)$-entropy admissible solution to
\eqref{oslo3''} is, at the same time, $(\alpha_T,\beta_T)$-entropy
admissible solution to \eqref{oslo3} (since on the range of the
solution it holds $f\chi_{[a,b]}\equiv f$ and $g\chi_{[a,b]}\equiv
g$). Since $f^c_{\alpha_T}$ and $g^c_{\beta_T}$ satisfy the crossing
conditions, according to Theorem \ref{t22}, we conclude that the
$(\alpha_T,\beta_T)$-entropy admissible solution to \eqref{oslo3''}
is unique making it a unique solution to \eqref{oslo3}. \end{proof}

{\bf Acknowledgement:} The work is initiated and, in the main part,
written while the author was postdoc at NTNU. It is supported in
part by the Research Council of Norway. Main ideas of the paper were
presented at the conference Multiscale Problems in Science and
Technology, Dubrovnik 2007.

\end{document}